\documentclass[12pt,oneside, a4paper]{article}
\pdfoutput=1
\setlength\parindent{0pt}


\usepackage{a4} 
\usepackage{times} 
\usepackage[affil-it]{authblk} 
\usepackage{amsthm,amsmath,amssymb}
\usepackage{thmtools} 
\usepackage{mathrsfs}
\usepackage{amsfonts}
\usepackage{mathtools}
\usepackage{algorithm}
\usepackage{algorithmic}
\usepackage{bbm}
\usepackage{refcount}
\usepackage{color}
\usepackage{textgreek}
\usepackage[normalem]{ulem} 
\usepackage{soul} 

\usepackage{fancyhdr}

\newtheorem{Theorem}{Theorem}
\newtheorem{Lemma}{Lemma}

\newtheorem{Remark}{Remark}
\newtheorem*{Asymptotics}{Asymptotics}
\newtheorem{Example}{Example}

\theoremstyle{definition}
\newtheorem{Condition}{Condition}


\DeclareMathOperator*{\argmin}{arg\,min}
\DeclareMathOperator*{\argmax}{arg\,max}


\title{Concentration behavior of the penalized least squares estimator 
~\\
~\\
 \fontsize{11}{13}\selectfont Penalized least squares behavior}

\author{%
Alan Muro and Sara van de Geer\\ 
\texttt{ \fontsize{11}{13}\selectfont \{muro,geer\}@stat.math.ethz.ch}}

\affil{\fontsize{11}{13}\selectfont Seminar f\"ur Statistik, ETH Z\"urich\\ R\"amistrasse 101, 8092 Z\"urich \\  }


\date{}


\usepackage[style=authoryear,bibstyle=authoryear,doi=false,url=false,
eprint=false,isbn=false,firstinits=true,babel=hyphen,
maxcitenames=2,maxbibnames=5, dashed=false,backend=bibtex]{biblatex}

\renewbibmacro{in:}{}
\DeclareFieldFormat[article]{title}{#1}
\DeclareFieldFormat{pages}{#1} 
\addbibresource{reference.bib}


\begin{document}
\maketitle

\begin{abstract}
Consider the standard nonparametric regression model and take as estimator the penalized least squares function. In this article, we study the trade-off between closeness to the true function and complexity penalization of the estimator, where complexity is described by a seminorm on a class of functions. First, we present an exponential concentration inequality revealing the concentration behavior of the trade-off of the penalized least squares estimator around a nonrandom quantity, where such quantity depends on the problem under consideration. 
Then, under
some conditions and for the proper choice of the tuning parameter, we obtain bounds for this nonrandom quantity. 
We illustrate our results with some examples that include the smoothing splines estimator.\\
~\\
\smallskip
\noindent 
\textbf{Keywords:} Concentration inequalities, regularized least squares, \\statistical trade-off. 
\end{abstract}

\section{Introduction}\label{S:Intro}

Let $Y_1,...,Y_n$ be independent real-valued response variables satisfying 

\begin{equation*}
Y_i = f^0(x_i) + \epsilon_i, \;i=1,...,n, 
\end{equation*}

where $x_1,...,x_n$ are given covariates in some space $\mathcal{X}$, $f^0$ is an unknown function in a given space $\mathcal{F}$, and $\epsilon_1,...,\epsilon_n$ are
independent standard Gaussian random variables. We assume that $f^0$ is
``smooth'' in some sense but that this degree of smoothness is unknown. We
will make this clear below.\\
~\\
To estimate $f^0$, we consider the penalized least squares estimator given by

\begin{equation}\label{eq:fHat}
\hat{f} := \argmin_{f\in\mathcal{F}} \left\lbrace ||Y - f||_n^2 + \lambda^2
  \mathcal{I}^2 (f) \right\rbrace,  
\end{equation}

where $\lambda>0$ is a tuning parameter and $\mathcal{I}$ a given seminorm on $\mathcal{F}$. Here, for a vector $u \in \mathbb{R}^n$, we write $||u||_n^2:= u^T u /n$, and we apply the same notation $f$ for the vector $f = (f(x_1),...,f(x_n))^T$ and the function $f\in\mathcal{F}$. Moreover, to avoid digressions from our main arguments, we assume that expression \eqref{eq:fHat} exists and is unique. \\
~\\
For a function $f\in\mathcal{F}$, define

\begin{equation}\label{tau}
\tau^2(f) :=  ||f - f^0||^2_n + \lambda^2
\mathcal{I}^2(f). 
\end{equation}  
  

This expression can be seen as a description of the trade-off of $f$. The term $||f-f^0||_n$ measures the closeness of $f$ to the true function, while $\mathcal{I}(f)$ quantifies its ``smoothness''. As mentioned above, we only assume that $f^0$ is not too complex, but that this degree of complexity is not known. This accounts for assuming that $\mathcal{I}(f^0) < \infty$, but that an upper bound for $\mathcal{I}(f^0)$ is unknown. Therefore, we choose as model class $\mathcal{F} = \{ f \, : \, \mathcal{I}(f) < \infty\}$. It is important to see that taking as model class $\mathcal{F}_0=\{ f : \mathcal{I}(f) \leq M_0 \}$, for some fixed $M_0 >0$, instead of $\mathcal{F}$ could lead to a model misspecification error if the unknown smoothness of $f^0$ is large. \\
~\\
Estimators with roughness penalization have been widely studied. \textcite{wahba1990spline} and \textcite{green1993nonparametric} consider the smoothing splines estimator, which corresponds to the solution
of \eqref{eq:fHat} when  $\mathcal{I}^2(f) = \int_0^1 |f^{(m)}(x)|^2
  \mathrm{d}x,$ where $f^{(m)}$  denotes the $m$-th derivative of $f$. \textcite{gu2002smooth} provides results for the more general penalized likelihood estimator with a general quadratic functional as complexity regularization. If we assume that this functional is a seminorm and that the noise follows a Gaussian distribution, then the penalized likelihood estimator reduces to the
estimator in \eqref{eq:fHat}.\\
~\\
Upper bounds for the estimation error can be found in the literature (see e.g. \textcite{Vaart}, \textcite{barrio2007lectures}). When no complexity regularization term is included, the standard method used to derive these is roughly as follows: first, the basic inequality 

\begin{equation}\label{eq:introBasicIneq}
||\hat{f}-f^0||^2_n \leq \frac{2}{n} \sup_f \left( \sum_{i=1}^n
\epsilon_i( f(x_i)-f^0(x_i)) \right)
\end{equation}

 is invoked.  Then, an inequality for the right hand side of \eqref{eq:introBasicIneq} is obtained for functions $f$ in $\{g\in\mathcal{F}: ||g-f^0||_n \leq R \}$ for some $R >0$. Finally, upper bounds for the estimation error are obtained with high probability by using entropy computations. When a penalty term is included, a similar approach can be used, but in this case the process in \eqref{eq:introBasicIneq} is
studied in terms of both $||f-f^0||_n$ and the smoothness of the functions
considered, i.e., $\mathcal{I}(f)$ and $\mathcal{I}(f^0)$.\\
~\\
A limitation of the approach mentioned above is that it does not allow us to obtain lower bounds. Consistency results have been proved (e.g. \textcite{vdg96}), but it is not clear how to use
these to derive explicit bounds. \textcite{ChatLSE2014} proposes a new approach to estimate the exact value of the error of least square estimators under convex constraints. It provides a concentration result for the estimation error by relating it with the expected maxima of a Gaussian process. One can then use this result to get both upper and lower bounds. In \textcite{vdGW2016}, a more ``direct" argument is employed to show that the error for a more general class of penalized least squares estimators is concentrated around its expectation. Here, the penalty is only assumed to be convex. Moreover, the authors also consider the approach from \textcite{ChatLSE2014} to derive a concentration result for the trade-off $\tau(\hat{f})$ for uniformly bounded function classes under general loss functions.\\
~\\
The goal of this paper is to contribute to the study of the ``behavior'' of $\tau(\hat{f})$ from a theoretical point of view. We consider the approach from \textcite{ChatLSE2014} and extend the ideas to the penalized least squares estimator with penalty based on a squared seminorm without making assumptions on the function space. We present a concentration inequality showing that $\tau(\hat{f})$ is concentrated around a nonrandom quantity $R_0$ (defined below) in the nonparametric regime. Here, $R_0$ depends on the sample size and the problem under consideration. Then, we derive upper and lower bounds for $R_0$ in Theorem \ref{Theorem:BoundsTau}. These are obtained for the proper choice of $\lambda$ and two additional conditions, including an entropy assumption. Combining this result with Theorem \ref{Theorem_CI}, one can obtain both upper and lower bounds for $\tau(\hat{f})$ with high probability for a sufficiently large sample size. 
We illustrate our results with some examples in section \ref{ss:Examples} and observe that we are able to recover  optimal rates of convergence for the estimation error from the literature.\\
~\\
We now introduce further notation that will be used in the following sections. Denote the minimum of $\tau$ over $\mathcal{F}$ by 
\begin{equation*}
R_{min} := \min_{f\in\mathcal{F}} \tau (f),
\end{equation*}

and let this minimum be attained by $f_{min}$. Note that $f_{min}$ can be seen as the unknown noiseless counterpart of
$\hat{f}$ and that $R_{min}$ is
a nonrandom unknown quantity. For two vectors $u,v\in\mathbb{R}^n$, let $\langle u , v\rangle$ denote the usual inner product. For $R \geq R_{min}$ and $\lambda >0$, define 

\begin{equation*}
 \mathcal{F}(R) := \left\lbrace f\in\mathcal{F} : \tau(f) \leq R \right\rbrace,
\end{equation*}

and

\begin{equation*}
M_n(R): = \sup_{f\in\mathcal{F}(R)}  \langle \epsilon, f- f^0 \rangle/n. 
\end{equation*}

Additionally, we write 

\begin{equation*}
M(R) := \mathbb{E} M_n(R),\quad\quad H_n(R): = M_n(R) - \frac{R^2}{2},\quad\quad H(R): = \mathbb{E} H_n(R).
\end{equation*}
  

Moreover, define the random quantity
\begin{equation*}
 R_* := \argmax_{R \geq R_{min}} H_n(R),
\end{equation*}   
and the nonrandom quantity
\begin{equation*}
R_0 := \argmax_{R \geq R_{min}} H(R).
\end{equation*} \\
~\\
From Lemma \ref{Lemma:H_concave}, it will follow that $R_{*}$ and $R_0$ are unique.\\
~\\
For ease of exposition, we will use the following asymptotic notation throughout this paper: for two positive sequences $\{x_n\}_{n=1}^\infty$ and
 $\{y_n\}_{n=1}^\infty$, we write $x_n=\mathcal{O}(y_n)$ if $\limsup |x_n/y_n| < \infty$ as $n \to \infty$ and $x_n=o(y_n)$ if $x_n/y_n \to 0$ as $n \to \infty$. Moreover, we employ the notation $x_n \asymp y_n$ if $x_n = \mathcal{O}(y_n)$ and $y_n = \mathcal{O}(x_n)$. In addition, we make use of the stochastic order symbols $\mathcal{O}_{P}$ and $o_{P}$.\\
~\\
It is important to note that the quantities $\lambda$, $\tau(\hat{f})$, $\tau(f^0)$, $R_{min}$, $M(R)$, $H(R)$,  $R_{*}$, and $R_0$ depend on the sample size $n$. However, we omit this dependence in the notation to simplify the exposition.

\subsection{Organization of the paper}\label{Ss:OrganizationPaper}

First, in section \ref{Ss:condANDresults}, we present the main results: Theorems \ref{Theorem_CI} and  \ref{Theorem:BoundsTau}. Note that the former does not require further assumptions than those from section \ref{S:Intro}, while the latter needs two extra conditions, which will be introduced after stating Theorem \ref{Theorem_CI}. Then, in section \ref{ss:Examples}, we illustrate the theory with some examples and, in section \ref{ss:Conclusions}, we present some concluding remarks. Finally, in section  \ref{S:Proofs}, we present all the proofs. We deferred to the appendix results from the literature used in this last section.

\section{Behavior of $\tau(\hat{f})$}\label{S:Result}

\subsection{Main results}\label{Ss:condANDresults}

The first theorem provides a concentration probability inequality for
$\tau(\hat{f})$ around $R_0$. It can be seen as an extension
of Theorem 1.1 from \textcite{ChatLSE2014}
to the penalized least squares estimator with a squared seminorm on $\mathcal{F}$ in the penalty term.

\begin{Theorem}\label{Theorem_CI}
For all $\lambda>0$ and $x>0$, we have
\begin{equation*}
\mathbb{P} \left( \Bigg\vert  \frac{\tau(\hat{f})}{R_0} - 1\Bigg\vert \geq x  \right) \leq 3 \exp \left(- \frac{x^4 (\sqrt{n}R_0)^2}{32 (1 + x)^2} \right).
\end{equation*}
\end{Theorem}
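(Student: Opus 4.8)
The plan is to reduce the statement, which concerns the random trade-off $\tau(\hat f)$, to a concentration inequality for the empirical maximizer $R_*$ around the population maximizer $R_0$, and then to exploit the concavity from Lemma \ref{Lemma:H_concave} together with Gaussian concentration for Lipschitz functionals of $\epsilon$.

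First I would establish the structural identity $\tau(\hat f)=R_*$. Expanding the objective in \eqref{eq:fHat} with $Y=f^0+\epsilon$ shows that minimizing $||Y-f||_n^2+\lambda^2\mathcal I^2(f)$ is equivalent to minimizing $\tau^2(f)-2\langle\epsilon,f-f^0\rangle/n$ over $f\in\mathcal F$. Splitting this into an outer minimization over the level $R=\tau(f)$ and an inner maximization of the linear functional $f\mapsto\langle\epsilon,f-f^0\rangle/n$ over $\{\tau(f)=R\}$, and observing that a linear functional attains its maximum over the convex set $\{\tau\le R\}$ on its boundary, the inner maximum equals $M_n(R)$. Hence $\tau(\hat f)$ minimizes $R^2-2M_n(R)=-2H_n(R)$, i.e. $\tau(\hat f)=R_*$, and it suffices to bound $\mathbb P(|R_*/R_0-1|\ge x)$.

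Next I would use concavity. Since $\tau$ is convex (it is the Euclidean norm of the componentwise nonnegative convex map $f\mapsto(||f-f^0||_n,\lambda\mathcal I(f))$), the sets $\{\tau\le R\}$ grow convexly in $R$, so $M_n$ and $M$ are concave, whence $H_n$ and $H$ are concave (Lemma \ref{Lemma:H_concave}); moreover $H(R)=M(R)-R^2/2$ is $1$-strongly concave, which yields the key quantitative gap $H(R_0)-H(R)\ge\tfrac12(R-R_0)^2$. Because $R_*$ maximizes the concave $H_n$, the event $\{R_*\ge(1+x)R_0\}$ forces $H_n((1+x)R_0)\ge H_n(R_0)$, and $\{R_*\le(1-x)R_0\}$ forces $H_n((1-x)R_0)\ge H_n(R_0)$ (the latter vacuous unless $(1-x)R_0\ge R_{min}$, which covers $x\ge 1$). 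Centering $H_n$ by $H$ and writing $\delta:=H(R_0)-H((1\pm x)R_0)\ge x^2R_0^2/2$, each such event implies that $[M_n-M]$ evaluated at $(1\pm x)R_0$ minus $[M_n-M]$ evaluated at $R_0$ is at least $\delta$. Splitting $A-B\ge\delta$ into $\{A\ge\delta/2\}\cup\{B\le-\delta/2\}$ reduces each tail to two single-point deviations of $M_n(R)-M(R)$.

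Finally I would apply Gaussian concentration. The map $\epsilon\mapsto M_n(R)$ is a supremum of linear functionals with gradients $(f-f^0)/n$, hence Lipschitz in $\epsilon$ with constant $R/\sqrt n$ (using $||f-f^0||_n\le\tau(f)\le R$), so the one-sided bound $\mathbb P(M_n(R)-M(R)\ge t)\le\exp(-nt^2/(2R^2))$ holds, and likewise for the lower deviation. With threshold $\delta/2\ge x^2R_0^2/4$ this gives exponent $x^4(\sqrt n R_0)^2/(32(1+x)^2)$ at the point $(1+x)R_0$, the slightly smaller $x^4(\sqrt n R_0)^2/(32(1-x)^2)$ at $(1-x)R_0$, and $x^4(\sqrt n R_0)^2/32$ at $R_0$, each bounded by $\exp(-x^4(\sqrt n R_0)^2/(32(1+x)^2))$. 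The bookkeeping point that produces the constant $3$ rather than $4$ is that the two ``$R_0$-low'' events arising from the upper and lower tails coincide, so the three surviving events are the deviations at $(1+x)R_0$, at $(1-x)R_0$, and at $R_0$; a union bound yields the claimed $3\exp(\cdots)$. I expect the main obstacle to be the first step, namely rigorously identifying $\tau(\hat f)$ with $R_*$ — in particular justifying that the inner linear maximization is attained on the sphere $\{\tau(f)=R\}$ so that it equals $M_n(R)$, and invoking uniqueness from Lemma \ref{Lemma:H_concave}; the remaining steps are routine once strong concavity and the Lipschitz constant $R/\sqrt n$ are in hand, modulo tracking constants to land exactly on $32(1+x)^2$.
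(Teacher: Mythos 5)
Your proposal is correct and follows essentially the same route as the paper: the identity $\tau(\hat f)=R_*$ (the paper's Lemma \ref{Lemma:UniquenessLSE}), the strong-concavity gap $H(R_0)-H(R)\geq (R-R_0)^2/2$ together with concavity of $H_n$ (Lemma \ref{Lemma:H_concave}), Gaussian concentration of $M_n(R)$ via the Lipschitz constant $R/\sqrt{n}$ (Lemma \ref{ConcIneqG}), and a three-event union bound localizing $R_*$ in $((1-x)R_0,(1+x)R_0)$, which is exactly the paper's event $e$ in Lemma \ref{Lemma:control_E} written in relative rather than absolute scale. The only differences are presentational (contrapositive phrasing of the localization, and proving $\tau(\hat f)=R_*$ by an outer/inner decomposition instead of the paper's explicit $f^*$ construction), and the attainment-on-the-sphere subtlety you flag is handled in the paper precisely by that construction at $R=R_*$ only.
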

~\\
\begin{Asymptotics} \label{Asymptotics_Thm1}
If $R_0$ satisfies $1/(\sqrt{n} R_0) = o(1)$, then

\begin{equation*}
\Bigg\vert \frac{\tau(\hat{f})}{R_0} - 1  \Bigg\vert = o_\mathbb{P}(1).
\end{equation*}

Therefore, the random fluctuations of $\tau(\hat{f})$ around $R_0$ are of negligible size in comparison with $R_0$. Moreover, this asymptotic result  implies that the asymptotic distribution of $\tau(\hat{f})/ R_0$ is degenerate. In Theorem \ref{Theorem:BoundsTau}, we will provide bounds for $R_0$ under some additional conditions and observe that $R_0$ satisfies the condition from above. 
\end{Asymptotics}

\begin{Remark}
Note that we only consider a square seminorm in the penalty term. A generalization of our results to penalties of the form $\mathcal{I}^q(\cdot),\, q \geq 2$, is straightforward but omitted for simplicity. The case $q<2$ is not considered in our study since our method of proof requires that the square root of the penalty term is convex, as can be observed in the proof of Lemma \ref{Lemma:H_concave}. 
\end{Remark}

Before stating the first condition required in Theorem \ref{Theorem:BoundsTau}, we will introduce the following definition:
let $S$ be some subset of a metric space $(\mathcal{S}, d)$. For $\delta
>0$, the \textit{$\delta$-covering number} $N(\delta , S, d)$ of $S$ is the smallest
value of $N$ such that there exist $s_1,...,s_N$ in $\mathcal{S}$ such that

\begin{equation*}
\min_{j=1,...,N} d(s, s_j) \leq \delta, \,\, \forall \, s\in S.
\end{equation*}
~\\
Moreover, for $\delta>$0, the \textit{$\delta$-packing number} $N_{D} (\delta, S, d)$ of $S$
is the largest value of N such that there exist $s_1,...,s_N$ in
$\mathcal{S}$ with
\begin{equation*}
d(s_k,s_j) > \delta, \,\, \forall k \neq j.
\end{equation*}

\begin{Condition}\label{Ss:CoveringNumberCondition} Let $ \alpha \in(0,2)$.
For all $\lambda > 0$ and $R \geq R_{min}$, we have
%

\begin{equation*}
 \log N(u, \mathcal{F}(R),||\cdot ||_n) = \mathcal{O}\left( \left(\frac{R/\lambda}{u}\right)^\alpha \right),\,\, u >0,
\end{equation*}
and for some $c_0 \in (0,2]$,
\begin{equation*}
\frac{1}{\log  N(c_0R, \mathcal{F}(R),||\cdot ||_n)} = \mathcal{O} \left( (c_0 \lambda) ^{\alpha}\right).
\end{equation*}

~\\
Condition \ref{Ss:CoveringNumberCondition} can be seen as a description of
the richness of $\mathcal{F}(R)$. We refer the reader to \textcite{SelectedKolmogorov} and \textcite{BirSolo1967} for an extensive study on entropy bounds, and to \textcite{Vaart} and \textcite{vdG2009empirical} for their application in empirical process theory.~\\

\end{Condition}

%

\begin{Condition}\label{Ss:RminCondition}  
$R_{min} \asymp \tau(f^0)$.\\

Condition \ref{Ss:RminCondition} relates the roughness penalty
of $f^0$ with the minimum trade-off achieved by functions in $\mathcal{F}$. It implies that our choice of the penalty term is appropriate. In other words, $\mathcal{I}(f_{min})$ and $\mathcal{I}(f^0)$ are not ``too far away" from each other when the tuning parameter is chosen properly. Therefore, aiming to mimic the trade-off of $f_{min}$ points us in the right direction as we would like to estimate $f^0$.
 \end{Condition}

The following result provides upper and lower bounds
for $R_0$. Note that $\mathcal{I}(f^0)$ is permitted to depend on the sample size. We assume that $\mathcal{I}(f^0)$ remains upper bounded and bounded away from zero as the sample size increases. However, we allow these bounds to be unknown. 

\begin{Theorem}\label{Theorem:BoundsTau}
Assume Conditions \ref{Ss:CoveringNumberCondition} and
\ref{Ss:RminCondition}, and that $\mathcal{I}(f^0) \asymp 1$. For

\begin{equation*}
\lambda \asymp n^{-\frac{1}{2+\alpha}}
\end{equation*} 

suitably chosen depending on $\alpha$, one has 

\begin{equation*}
R_0 \asymp n^{-\frac{1}{2+\alpha}}.
\end{equation*}

Therefore, we obtain

\begin{equation*}
\tau(\hat{f}) \asymp n^{-\frac{1}{2+\alpha}}
\end{equation*}

with probability at least $1- 3 \exp\left( -c' n^{\frac{\alpha}{2 + \alpha}} \right) - 3 \exp\left( -c'' n^{\frac{\alpha}{2 + \alpha}} \right)$,
where $c', c''$ denote some positive constants not depending on the sample size.

\end{Theorem}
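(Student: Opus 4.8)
The plan is to first pin down the exact order of $M(R)$ for every $R \ge R_{min}$, then combine this with the concavity of $H$ from Lemma~\ref{Lemma:H_concave} to locate $R_0$, and finally feed the resulting rate into Theorem~\ref{Theorem_CI}.

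The first and most substantial step is to sandwich $M(R)$. Recall that $M(R)$ is the expectation of the supremum of the centered Gaussian process $f \mapsto \langle \epsilon, f-f^0\rangle/n$ over $\mathcal{F}(R)$, whose intrinsic standard-deviation metric is $\tfrac{1}{\sqrt n}\|\cdot\|_n$ and whose radius about $f^0$ is at most $R$ (since $\|f-f^0\|_n \le \tau(f) \le R$). For the upper bound I would invoke Dudley's entropy integral together with the first half of Condition~\ref{Ss:CoveringNumberCondition}; because $\alpha<2$ the integral $\int_0^{2R} (R/(\lambda u))^{\alpha/2}\,du$ converges and evaluates to a constant times $R\lambda^{-\alpha/2}$, yielding $M(R) = \mathcal{O}\bigl(R/(\sqrt n\,\lambda^{\alpha/2})\bigr)$ uniformly in $R\ge R_{min}$. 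For the matching lower bound I would apply Sudakov minoration at the single scale $c_0R$: passing to a maximal packing and using the second half of Condition~\ref{Ss:CoveringNumberCondition}, which lower bounds $\log N(c_0R,\mathcal{F}(R),\|\cdot\|_n)$ by a constant times $(c_0\lambda)^{-\alpha}$, gives $M(R)\gtrsim \tfrac{c_0R}{\sqrt n}(c_0\lambda)^{-\alpha/2}\asymp R/(\sqrt n\,\lambda^{\alpha/2})$. Writing $A:=1/(\sqrt n\,\lambda^{\alpha/2})$, I obtain constants $0<c_1\le c_2$ independent of $n$ and $R$ such that $c_1 A R \le M(R) \le c_2 A R$ for all $R\ge R_{min}$.

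Next I would locate $R_0$. Since $H(R)=M(R)-R^2/2$, the previous step traps $H$ between the two parabolas $c_i AR - R^2/2$. The lower bound on $R_0$ is immediate: by definition $R_0\ge R_{min}$, and Condition~\ref{Ss:RminCondition} with $\tau(f^0)=\lambda\,\mathcal{I}(f^0)\asymp\lambda$ gives $R_{min}\asymp\lambda$, hence $R_0\gtrsim\lambda$. For the upper bound I exploit that the constant in $\lambda\asymp n^{-1/(2+\alpha)}$ is free; since $R_{min}\asymp\lambda$ while $A\asymp \lambda^{-\alpha/2}/\sqrt n$, shrinking this constant forces $R_{min}\le c_1 A$. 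Then $c_1 A$ is feasible and $H(R_0)\ge H(c_1A)\ge (c_1A)^2/2$, while $H(R_0)\le c_2 A R_0 - R_0^2/2$; combining gives the quadratic inequality $R_0^2 - 2c_2 A R_0 + c_1^2 A^2 \le 0$, so $R_0 \le (c_2+\sqrt{c_2^2-c_1^2})\,A \asymp A$. Concavity of $H$ from Lemma~\ref{Lemma:H_concave} guarantees $R_0$ is the unique maximizer, so these one-sided comparisons genuinely bracket it. Substituting $\lambda\asymp n^{-1/(2+\alpha)}$ then gives $A\asymp n^{-1/(2+\alpha)}$ and $R_{min}\asymp n^{-1/(2+\alpha)}$, so both bounds collapse to $R_0\asymp n^{-1/(2+\alpha)}$, which in particular satisfies $1/(\sqrt n R_0)=o(1)$. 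Finally I apply Theorem~\ref{Theorem_CI} with a fixed $x\in(0,1)$: on the event $|\tau(\hat f)/R_0-1|\le x$ we have $\tau(\hat f)\asymp R_0\asymp n^{-1/(2+\alpha)}$, and since $(\sqrt n R_0)^2\asymp n^{\alpha/(2+\alpha)}$ the failure probability is at most $3\exp(-c\,n^{\alpha/(2+\alpha)})$; controlling the upper and lower deviations separately and taking a union bound produces the two-term probability in the statement.

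The main obstacle is the passage from the sandwich on $M$ (hence on $H$) to the rate for $R_0$: the maximizer of a function trapped between two parabolas need not lie between their maximizers, so concavity together with the positivity of $H$ at a well-chosen feasible point is essential, as is the freedom in the constant defining $\lambda$ to ensure $R_{min}\le c_1A$ and thereby keep the maximizer off the boundary. A secondary difficulty is making the Dudley and, especially, the Sudakov estimates hold uniformly in $R$ with constants independent of $n$ — which is precisely what the two halves of Condition~\ref{Ss:CoveringNumberCondition} are tailored to deliver.
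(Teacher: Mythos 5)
Your proposal is correct and takes essentially the same route as the paper: your Dudley/Sudakov sandwich of $M(R)$ under Condition \ref{Ss:CoveringNumberCondition} is exactly the paper's Lemma \ref{Lemma_BoundsHR}, your bracketing of $R_0$ via $R_{min}\asymp\lambda$ (Condition \ref{Ss:RminCondition} plus $\mathcal{I}(f^0)\asymp 1$) with the constant in $\lambda$ shrunk so that $R_{min}$ lies below the lower parabola's maximizer matches the paper's argument, and the transfer to $\tau(\hat{f})$ via Theorem \ref{Theorem_CI} with separate upper/lower deviation bounds is the same. The only cosmetic difference is that you bound $R_0$ by solving the quadratic inequality $R_0^2 - 2c_2AR_0 + c_1^2A^2 \le 0$, whereas the paper observes that the upper parabola $g_2$ is negative beyond $2K_2$ while $H(K_1)\ge K_1^2/2>0$; both rest on the same comparison of $H(R_0)$ with its value at a feasible point.
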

~\\
Theorem \ref{Theorem:BoundsTau} shows that one can obtain bounds for $R_0$ if we choose the tuning parameter properly. Since the condition $1/(\sqrt{n} R_0) = o(1)$ is satisfied, one obtains then convergence in probability of the ratio $\tau(\hat{f})/R_0$ to a constant. Moreover, the theorem also provides bounds for the trade-off of $\hat{f}$. Note that these hold with probability tending to one.

\begin{Remark}
From Theorem \ref{Theorem:BoundsTau}, one obtains 

\begin{equation*}
 ||\hat{f} - f^0||^2_n + \lambda^2 \mathcal{I}^2(\hat{f}) \asymp \min_{f\in\mathcal{F}}
\Big( ||f- f^0||^2_n + \lambda^2 \mathcal{I}^2(f) \Big)
\end{equation*}

with high probability for a sufficiently large sample size. One can then say that the trade-off of $\hat{f}$ mimics that of its noiseless counterpart $f_{min}$, i.e., $\hat{f}$ behaves as if there were no noise in our observations. Therefore, our choice of $\lambda$ allows us to control the random part of the problem using the penalty term and ``over-rule'' the variance of the noise. 

\end{Remark}

\begin{Remark}\label{remark:RecoverRates}
From Theorem \ref{Theorem:BoundsTau}, one can observe that

\begin{equation*}
|| \hat{f}- f^0 ||_n = \mathcal{O}_{P}(n^{-\frac{1}{2+\alpha}}),
\end{equation*}

\begin{equation*}
\mathcal{I}(\hat{f})= \mathcal{O}_{P}(1).
\end{equation*}


Therefore, we are able to recover the rate of convergence for the estimation error of
$\hat{f}$. This will be illustrated in section \ref{ss:Examples}. Furthermore, we observe that the degree of smoothness of $\hat{f}$ is bounded in probability. Although we have a lower bound for $\tau(\hat{f})$, this does neither imply (directly) a lower bound for $||\hat{f}-f^0||_n$, nor for $\mathcal{I}(\hat{f})$.
\end{Remark}

\subsection{Examples}\label{ss:Examples} 

In the following, we require that the assumptions in Theorem \ref{Theorem:BoundsTau} hold. In each example, we provide references to results from the literature where one can verify that Condition \ref{Ss:CoveringNumberCondition} is satisfied and refer the interested reader to these for further details. For the lower bounds, one may first note that
\begin{align*}
~
&N(u, \mathcal{F}(R), || \cdot ||_n) \\
& \geq N \left(u, \left\lbrace f\in\mathcal{F} : ||f-f^0||_n \leq \frac{R} {\sqrt{2}},\, \mathcal{I}(f) \leq \frac{R}{ \sqrt{2} \lambda}  \right\rbrace, || \cdot ||_n \right)
\end{align*}

and additionally insert the results from \textcite{yang1999}, where the authors show that often global and local entropies are of the same order for some $f^0$.
%
%

\begin{Example}\label{Example:SS}
Let $\mathcal{X}=[0,1]$ and $\mathcal{I}^2(f) = \int_0^1 |f^{(m)}(x)|^2
\mathrm{d}x$ for some $m\in\{2,3,...\}$. Then $\hat{f}$ is the smoothing spline estimator and can be explicitly
computed (e.g. \textcite{green1993nonparametric}). Moreover, it can be shown that in this case Condition \ref{Ss:CoveringNumberCondition} holds with $\alpha = 1/m$ under some conditions on the design matrix (\textcite{SelectedKolmogorov} and Example 2.1 in \textcite{vdg1990}). Therefore from Theorem \ref{Theorem:BoundsTau}, we have that the standard choice $\lambda \asymp n^{-\frac{m}{2m+1}}$ yields 
\begin{equation*}
R_0 \asymp
n^{-\frac{m}{2m+1}}.
\end{equation*}

Moreover, we obtain upper and lower bounds for $\tau(\hat{f})$  with high probability for large $n$. Then by Remark \ref{remark:RecoverRates}, we recover the optimal rate of convergence for $||\hat{f}-f^0||_n$ (\textcite{stone1982}). \\
  ~\\
Now we consider the case where the design points have a larger dimension. Let $\mathcal{X}=[0,1]^d$ with $d \geq 2$ and define the $d$-dimensional index $r=(r_1,...,r_d)$, where the values $r_i$ are non-negative integers. We write $|r|=\sum_{i=1}^d r_i$. Furthermore, denote by $D^{r}$ the differential operator defined by

\begin{equation*}
D^r f(x)= \frac{\partial ^{|r|}}{\partial x_1^{r_1} \cdot\cdot\cdot \partial x_d^{r_d}} f(x_1,...,x_d)
\end{equation*}

and consider as roughness penalization 

\begin{equation*}
\mathcal{I}^2(f) = \sum_{|r|=m} \int | D^r f(x)  |^2 \mathrm{d}x,
\end{equation*}

with $m > d/2$.  In this case, Condition \ref{Ss:CoveringNumberCondition} holds with $\alpha = d/m$ (\textcite{BirSolo1967}) 
 Then by Theorem \ref{Theorem:BoundsTau}, we have that for the choice $\lambda \asymp n^{-\frac{m}{2m + d}}$  we obtain

\begin{equation*}
R_0 \asymp n^{-\frac{m}{2m + d}}.
\end{equation*}

Similarly as above, we are able to recover the optimal rate for the estimation error. 
\end{Example}

\begin{Example}\label{Example:TV}
In this example, define the total variation penalty as
\begin{equation*}
TV(f) = \sum_{i=2}^n | f(x_i) - f(x_{i-1}) |,
\end{equation*}
where $x_1,...,x_n$ denote the design points.\\
~\\
Let $\mathcal{X}$ be real-valued and $\mathcal{I}^2(f) =(TV(f))^2$. In this case, Condition \ref{Ss:CoveringNumberCondition} is fulfilled for $\alpha=1$ (\textcite{BirSolo1967}). 
The advantage of the total variation penalty over that from Example
\ref{Example:SS} is that it can be used for unbounded $\mathcal{X}$. \\
~\\
Let $\alpha=1$ in Theorem \ref{Theorem:BoundsTau}. For the choice $\lambda \asymp n^{-\frac{1}{3}}$, we have 

\begin{equation*}
R_0 \asymp n^{-\frac{1}{3}},
\end{equation*}

and we also obtain bounds for $\tau(\hat{f})$ with high probability for large $n$. By Remark \ref{remark:RecoverRates}, we also recover the optimal upper bound for the estimation error. 
\end{Example}

\subsection{Conclusions}\label{ss:Conclusions}

Theorem \ref{Theorem_CI} derives a concentration result for $\tau(\hat{f})$ around a nonrandom quantity rather than just an upper bound. In particular, we observe that the ratio $\tau(\hat{f})/R_0$ convergences in probability to $1$ if  $R_0$ satisfies $1 / (\sqrt{n}R_0) = o(1)$. This condition holds in the nonparametric setting for $\lambda$ suitably chosen and $\mathcal{I}(f^0) \asymp 1$, as shown in Theorem \ref{Theorem:BoundsTau} and in our examples from section \ref{ss:Examples}.\\
~\\
The strict concavity of $H$ and $H_n$ (Lemma \ref{Lemma:H_concave}) plays an important role in the derivation of both theorems. In our work, the proof of this property requires that the square root of the penalty term is convex. 
Furthermore, the proof of both Theorems \ref{Theorem_CI} and \ref{Theorem:BoundsTau} rely on the fact that the noise vector is Gaussian. This can be seen in Lemma \ref{ConcIneqG}, where we invoke a concentration result for functions of independent Gaussian random variables, and in Lemma \ref{Lemma_BoundsHR}, where we employ a lower bound for the expected value of the supremum of Gaussian processes to bound the function $H$. 

\section{Proofs}\label{S:Proofs}

This section is divided in two parts. In section \ref{Ss:Proofs_Lemmas}, we first state and prove the lemmas necessary to
prove Theorem \ref{Theorem_CI}. These follow closely the proof of Theorem 1.1
from \textcite{ChatLSE2014}, however, here we include a roughness penalization term for
functions in $\mathcal{F}$. At the end of this section, we combine these lemmas to prove the first theorem. In section \ref{Ss:Proofs_Theorems}, we first prove an additional result necessary to establish Theorem \ref{Theorem:BoundsTau}. After this, we present the proof of the second theorem. Some results from the literature used in our proofs are deferred to the appendix. 

\subsection{Proof of Theorem \ref{Theorem_CI}}\label{Ss:Proofs_Lemmas}

\begin{Lemma}\label{Lemma:H_concave}
For all $\lambda >0$, $H_n(\cdot)$ and $H(\cdot)$ are strictly concave functions.
\end{Lemma}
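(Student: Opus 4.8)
The plan is to reduce the strict concavity of $H_n$ and $H$ to the ordinary concavity of $M_n$ and $M$. Since $H_n(R) = M_n(R) - R^2/2$ and $H(R) = M(R) - R^2/2$, and the map $R \mapsto -R^2/2$ is strictly concave, it suffices to prove that $M_n(\cdot)$ (and then, by taking expectations, $M(\cdot)$) is concave on $[R_{min},\infty)$. Indeed, the sum of a concave function and a strictly concave function is strictly concave, so once concavity of $M_n$ and $M$ is established, adding $-R^2/2$ immediately upgrades it to strict concavity of $H_n$ and $H$.

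First I would establish that the map $f \mapsto \tau(f)$ is convex on $\mathcal{F}$. Writing $\tau(f) = \sqrt{a(f)^2 + b(f)^2}$ with $a(f) = \|f - f^0\|_n$ and $b(f) = \lambda\,\mathcal{I}(f)$, both $a$ and $b$ are nonnegative convex functions of $f$: the former because $\|\cdot\|_n$ is a norm composed with the affine map $f \mapsto f - f^0$, and the latter because $\mathcal{I}$ is a seminorm. Since $g(s,t) = \sqrt{s^2+t^2}$ is convex and nondecreasing in each coordinate on $[0,\infty)^2$, the composition $\tau = g(a,b)$ is convex. This is precisely the step where convexity of the square root of the penalty term $b = \lambda\,\mathcal{I}(f)$ is needed, and it is why the argument fails for penalties $\mathcal{I}^q$ with $q < 2$. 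As a consequence, each sublevel set $\mathcal{F}(R) = \{f : \tau(f) \leq R\}$ is convex, and the sets are nested in $R$.

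Next I would prove concavity of $M_n$ directly from its definition as a supremum of the affine functional $\phi(f) := \langle \epsilon, f - f^0\rangle/n$ over the convex sets $\mathcal{F}(R)$. Fix $R_1, R_2 \geq R_{min}$ and $t \in [0,1]$, and set $R = tR_1 + (1-t)R_2$. Given near-maximizers $f_1 \in \mathcal{F}(R_1)$ and $f_2 \in \mathcal{F}(R_2)$, the convex combination $f_t = tf_1 + (1-t)f_2$ satisfies $\tau(f_t) \leq t\,\tau(f_1) + (1-t)\,\tau(f_2) \leq R$ by convexity of $\tau$, so $f_t \in \mathcal{F}(R)$; and since $\phi$ is affine in $f$, one has $\phi(f_t) = t\,\phi(f_1) + (1-t)\,\phi(f_2)$. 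Hence $M_n(R) \geq \phi(f_t) = t\,\phi(f_1) + (1-t)\,\phi(f_2)$, and passing to the supremum over $f_1, f_2$ (via an $\varepsilon$-relaxation, since the suprema need not be attained) yields $M_n(R) \geq t\,M_n(R_1) + (1-t)\,M_n(R_2)$. As this holds for every realization of $\epsilon$, taking expectations preserves the inequality and gives concavity of $M$.

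I expect the main obstacle to be the convexity of $\tau$: everything downstream is a routine interpolation argument resting only on the affinity of $\phi$ and the nesting of the $\mathcal{F}(R)$, but verifying that $\sqrt{a^2+b^2}$ composed with two seminorm-type functions is genuinely convex, and making explicit why this forces the square root of the penalty to be convex, is the crux of the lemma. A secondary point requiring care is the handling of the supremum when it is not attained, which the $\varepsilon$-argument resolves cleanly.
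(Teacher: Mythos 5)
Your proposal is correct and follows essentially the same route as the paper's proof: establish convexity of $\tau$ (which is exactly where convexity of the square root of the penalty enters), use it to show that convex combinations of elements of $\mathcal{F}(R_1)$ and $\mathcal{F}(R_2)$ lie in $\mathcal{F}(tR_1+(1-t)R_2)$, deduce concavity of $M_n$ from affinity of $f \mapsto \langle \epsilon, f-f^0\rangle/n$, take expectations to get concavity of $M$, and add the strictly concave $-R^2/2$. Your treatment is if anything slightly more careful than the paper's (explicit composition argument for the convexity of $\tau$, and the $\varepsilon$-relaxation for suprema that need not be attained, which the paper sidesteps by decoupling the supremum over the pair $(v_s,v_b)$).
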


\begin{proof}

Let $\lambda >0$. Take any two values $r_s,r_b$ such that $R_{min} \leq r_s \leq
r_b$ and define 

\begin{equation*}
\mathcal{F}_{s,b} := \{ f_t=tv_s+(1-t)v_b \, | \, t\in [0,1],
v_s\in\mathcal{F}(r_s), v_b\in\mathcal{F}(r_b) \}. 
\end{equation*}

Take $v_t\in
\mathcal{F}_{s,b}$ and let $r =tr_s + (1-t)r_b$. By properties of a
seminorm, we have that $\mathcal{I}(v_t) < \infty$, which implies that $v_t \in
\mathcal{F}$. Moreover, we have that

\begin{align*}
\tau(v_t) 
& = \sqrt{ ||v_t - f^0||_n^2 + \lambda^2 \mathcal{I}^2(v_t) }\\
& \leq t \sqrt{ ||v_s - f^0||_n^2 + \lambda^2 \mathcal{I}^2(v_s)  } + (1-t) \sqrt{ ||v_b - f^0||_n^2 + \lambda^2 \mathcal{I}^2(v_b)  }\\
& \leq t \tau(v_s) + (1-t) \tau(v_b)
\leq t r_s + (1-t) r_b,
\end{align*}

where the first inequality uses the fact that the square root of the penalty term $\sqrt{\lambda^2 \mathcal{I}^2(\cdot)}$ is convex. Therefore, $v_t\in\mathcal{F}(r)$. Using these equations, we have

\begin{align}\label{proof:aux_ineq_Mn}
M_n(r) = \sup_{f\in\mathcal{F}(r)}  \langle \epsilon, f- f^0 \rangle/n
& \geq  \sup_{f\in\mathcal{F}_{s,b}}  \langle \epsilon, f- f^0 \rangle/n\\
& = \sup_{\substack{v_s,v_b\in\mathcal{F} \\ \tau(v_s) \leq r_s \\
    \tau(v_b)\leq r_b}}  \langle \epsilon, tv_s + (1-t)v_b - f^0
\rangle/n \nonumber\\
& = t M_n(r_s) + (1-t) M_n(r_b). \nonumber
\end{align}

Therefore, $M_n$ is concave for all $\lambda >0$. Taking expected value in the equations in \eqref{proof:aux_ineq_Mn} yields that $M$ is concave for all $\lambda >0$. Since $g(r) := - \frac{r^2}{2}$ is strictly concave, then $H_n$ and $H$ are strictly concaves and we have our result.
\end{proof}

%
%
%
%

\begin{Lemma}\label{Lemma:UniquenessLSE}
For all $\lambda >0$, we have that $\tau(\hat{f}) = R_*$. 
\end{Lemma}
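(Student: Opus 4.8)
The plan is to rewrite the defining optimization for $\hat{f}$ in terms of the Gaussian process $\langle \epsilon, \cdot - f^0\rangle/n$ and the trade-off $\tau$, and then to reduce the infinite-dimensional maximization over $\mathcal{F}$ to the one-dimensional maximization of $H_n$ that defines $R_*$. First I would substitute $Y = f^0 + \epsilon$ into the objective in \eqref{eq:fHat}. Expanding $||Y - f||_n^2 = ||\epsilon||_n^2 - 2\langle \epsilon, f - f^0\rangle/n + ||f - f^0||_n^2$ and adding $\lambda^2 \mathcal{I}^2(f)$, the term $||\epsilon||_n^2$ does not depend on $f$, so minimizing the objective is equivalent to maximizing $\langle \epsilon, f - f^0\rangle/n - \tau^2(f)/2$. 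Hence $\hat{f} = \argmax_{f\in\mathcal{F}} \left\{ \langle \epsilon, f - f^0\rangle/n - \tau^2(f)/2 \right\}$.

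Next I would identify this maximal value with $H_n(R_*)$. For any $R \geq R_{min}$ and $f \in \mathcal{F}(R)$, the bound $\tau^2(f) \leq R^2$ gives $\langle \epsilon, f - f^0\rangle/n - \tau^2(f)/2 \geq \langle \epsilon, f - f^0\rangle/n - R^2/2$; taking the supremum over $\mathcal{F}(R)$ yields a lower bound of $M_n(R) - R^2/2 = H_n(R)$, and optimizing over $R$ shows the maximal value is at least $H_n(R_*)$. Conversely, for an arbitrary $f$, setting $R = \tau(f) \geq R_{min}$ makes $f \in \mathcal{F}(R)$, so $\langle \epsilon, f - f^0\rangle/n \leq M_n(R)$ and thus $\langle \epsilon, f - f^0\rangle/n - \tau^2(f)/2 \leq H_n(R) \leq H_n(R_*)$. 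Combining the two inequalities, the maximal value equals $H_n(R_*)$.

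Finally I would read off the $\tau$-value of the maximizer. Writing $\hat{R} = \tau(\hat{f}) \geq R_{min}$ and using $\hat{f} \in \mathcal{F}(\hat{R})$, the maximal value satisfies $\langle \epsilon, \hat{f} - f^0\rangle/n - \hat{R}^2/2 \leq M_n(\hat{R}) - \hat{R}^2/2 = H_n(\hat{R}) \leq H_n(R_*)$. Since the left-hand side equals $H_n(R_*)$ by the previous step, equality holds throughout, so $H_n(\hat{R}) = H_n(R_*)$. By the strict concavity of $H_n$ established in Lemma \ref{Lemma:H_concave}, its maximizer $R_*$ is unique, whence $\hat{R} = R_*$, that is, $\tau(\hat{f}) = R_*$.

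The step I expect to be the main obstacle is the second one: passing from the maximization over all of $\mathcal{F}$ to the one-dimensional profile $H_n$. The care lies in the asymmetry of the defining constraint $\tau(f) \leq R$ in $M_n(R)$ — the lower bound exploits that enlarging $R$ relaxes the constraint while only shrinking the penalty $-R^2/2$, whereas the matching upper bound requires choosing $R = \tau(f)$ exactly, so that $f$ sits on the boundary of $\mathcal{F}(R)$ and the two expressions coincide. Strict concavity (Lemma \ref{Lemma:H_concave}) is then essential to conclude uniqueness of the maximizer and hence to pin down $\tau(\hat{f})$ precisely, rather than merely to locate it in an argmax set.
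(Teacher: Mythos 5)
Your proof is correct, and it rests on the same key identity as the paper's: substituting $Y = f^0 + \epsilon$ to rewrite the penalized least-squares objective as $\|Y-f^0\|_n^2 - 2\left\lbrace \langle \epsilon, f-f^0\rangle/n - \tau^2(f)/2\right\rbrace$, so that $\hat f$ maximizes $f \mapsto \langle \epsilon, f-f^0\rangle/n - \tau^2(f)/2$. Where you genuinely diverge is in how this is converted into the statement $\tau(\hat f)=R_*$. The paper picks a function $f^*\in\mathcal{F}(R_*)$ attaining the supremum $M_n(R_*)$, shows $\tau(f^*)=R_*$ by contradiction with the definition of $R_*$, verifies via the decomposition that $f^*$ minimizes the least-squares objective, and finally invokes the assumed uniqueness of the minimizer to conclude $\hat f = f^*$. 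Your argument instead works purely at the level of optimal values: the two-sided bound showing $\sup_f\left\lbrace \langle\epsilon,f-f^0\rangle/n - \tau^2(f)/2\right\rbrace = \sup_{R\geq R_{min}} H_n(R) = H_n(R_*)$, the sandwich $H_n(R_*) \leq H_n(\tau(\hat f)) \leq H_n(R_*)$, and then strict concavity of $H_n$ (Lemma \ref{Lemma:H_concave}) to identify $\tau(\hat f)$ with the unique maximizer. This buys you two things the paper's proof quietly assumes: you never need the supremum defining $M_n(R_*)$ to be attained by some $f^*$ (not automatic over the infinite-dimensional set $\mathcal{F}(R_*)$), and you never need uniqueness of $\hat f$ --- your argument shows that \emph{every} minimizer of the penalized objective has trade-off exactly $R_*$. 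The price is an explicit appeal to Lemma \ref{Lemma:H_concave}, which the paper's proof of this particular lemma does not use (it only needs $R_*$ to be a maximizer of $H_n$); that appeal is harmless here, since Lemma \ref{Lemma:H_concave} is proved independently and earlier.
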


\begin{proof}
Let $f^* \in \mathcal{F}(R_*)$ be such that

\begin{equation*}
\langle \epsilon , f^* - f^0 \rangle /n = \sup_{f\in\mathcal{F}(R_*)}  \langle \epsilon, f- f^0 \rangle/n.
\end{equation*}
We will show first that $\tau(f^*) = R_*$. Suppose $\tau(f^*) = \tilde{R}$ for some $R_{min} \leq \tilde{R} < R_*$. Note that then $M_n(\tilde{R}) = M_n(R_*)$, and therefore, we have

\begin{equation*}
H_n(\tilde{R}) = H_n(R_*) + \left(
  \frac{R_*}{2} - \frac{\tilde{R}}{2}  \right) > H_n(R_*),
\end{equation*}

which is a contradiction by definition of $R_*$. We must then have that $\tau(f^*) = R_*$. \\
~\\
Now we will prove that $\tau(\hat{f}) = \tau(f^*)$. For all $\lambda >0$ and for all $f\in\mathcal{F}$, we have 

\begin{align}\label{eq:aux_proof_uniqueness}
~ 
& || Y - f ||^2_n + \lambda^2 \mathcal{I}^2(f)\\ \nonumber
& = || Y - f^0 ||^2_n - 2 \left\lbrace \langle \epsilon, f - f^0 \rangle / n - \frac{1}{2} \Bigg( || f - f^0||_n^2 + \lambda^2 \mathcal{I}^2(f) \Bigg) \right\rbrace\\ \nonumber
& \geq || Y - f^0 ||^2_n - 2 H_n(\sqrt{|| f - f^0||_n^2 + \lambda^2 \mathcal{I}^2(f)})\\ \nonumber
& \geq || Y - f^* ||^2_n + \lambda^2 \mathcal{I}^2(f^*).\nonumber
\end{align}

In consequence, by definition of $\hat{f}$ and the inequalities in \eqref{eq:aux_proof_uniqueness}, both $\hat{f}$ and $f^*$ minimize 
\begin{equation*}
|| Y - f ||^2_n + \lambda^2 \mathcal{I}^2(f). 
\end{equation*}

and by uniqueness, it follows that $\tau(\hat{f})=\tau(f^*)$.
\end{proof}

%
%
%
%
%
%
%
%

\begin{Lemma}\label{Lemma:control_E}
For $x >0$, define

\begin{equation*}
s_1 :=R_0 - x,\,\, s_2 :=R_0 +x,\,\, z := H(R_0) - x^2/4.
\end{equation*} 
Moreover, define the event $e = \left\lbrace \left\lbrace H_n(s_1) < z
  \right\rbrace\;\wedge\;\left\lbrace H_n(s_2) < z\right\rbrace\wedge
  \left\lbrace H_n(R_0) > z \right\rbrace \right\rbrace$.\\ We have
\begin{equation*}
\mathbb{P}\left( e^c \right) \leq 3 \exp \left(-\frac{n\,x^4}{32(R_0 + x)^2} \right).
\end{equation*}

\end{Lemma}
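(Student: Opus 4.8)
The plan is to bound $\mathbb{P}(e^c)$ by a union bound over the three defining events and to control each resulting probability with a Gaussian concentration inequality for $M_n$, using the strong concavity of $H$ to place $H(s_1)$ and $H(s_2)$ strictly below $H(R_0)$. First I would write $e^c \subseteq \{H_n(s_1) \geq z\} \cup \{H_n(s_2) \geq z\} \cup \{H_n(R_0) \leq z\}$, so that it suffices to bound each of the three probabilities by $\exp(-n x^4/(32(R_0+x)^2))$; summing then produces the factor $3$.

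The key deterministic input is a quadratic gap estimate for $H$ at its maximizer, namely $H(R_0) - H(R) \geq \tfrac12 (R-R_0)^2$ for all admissible $R$. I would derive this from the concavity of $M$ (established in Lemma \ref{Lemma:H_concave}) together with the exact identity $\tfrac12((1-t)R_0+tR)^2 = (1-t)\tfrac{R_0^2}{2} + t\tfrac{R^2}{2} - \tfrac12 t(1-t)(R-R_0)^2$: combining these gives $H((1-t)R_0 + tR) \geq (1-t)H(R_0) + tH(R) + \tfrac12 t(1-t)(R-R_0)^2$, and inserting the optimality inequality $H((1-t)R_0+tR) \leq H(R_0)$ and letting $t \downarrow 0$ yields the claim. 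Applying it with $R=s_1$ and $R=s_2$ (where $(R-R_0)^2 = x^2$) gives $H(s_i) \leq H(R_0) - x^2/2$.

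The probabilistic input is Gaussian concentration. For fixed $R$, the map $\epsilon \mapsto M_n(R)$ is a supremum of the linear functionals $\epsilon \mapsto \langle \epsilon, f-f^0\rangle/n$, whose gradients $(f-f^0)/n$ have Euclidean norm $\|f-f^0\|_n/\sqrt{n} \leq \tau(f)/\sqrt{n} \leq R/\sqrt{n}$ on $\mathcal{F}(R)$; hence $M_n(R)$ is Lipschitz in $\epsilon$ with constant $R/\sqrt{n}$. The concentration inequality (Lemma \ref{ConcIneqG}) then gives $\mathbb{P}(|M_n(R) - M(R)| \geq t) \leq 2\exp(-n t^2/(2R^2))$, and since $H_n(R)-H(R) = M_n(R)-M(R)$ the same tail holds for $H_n - H$. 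Now on $\{H_n(s_2) \geq z\}$ one has $H_n(s_2) - H(s_2) \geq z - H(s_2) \geq (H(R_0)-x^2/4)-(H(R_0)-x^2/2) = x^2/4$, so with $t=x^2/4$ and $R=s_2=R_0+x$ I get exactly $\exp(-n x^4/(32(R_0+x)^2))$; the event for $s_1$ is identical and, since $(R_0-x)^2 \leq (R_0+x)^2$, its bound is no larger; for $\{H_n(R_0)\leq z\}$ the lower tail with $t=x^2/4$, $R=R_0$ gives $\exp(-n x^4/(32R_0^2)) \leq \exp(-n x^4/(32(R_0+x)^2))$.

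I expect the main obstacle to be making the strong-concavity gap estimate rigorous without assuming differentiability of $H$, which is why I would argue through the explicit quadratic identity and the limit $t\downarrow 0$ rather than a first-order condition. A minor boundary point to address is the case $s_1 < R_{min}$, where $\mathcal{F}(s_1)$ is empty and $H_n(s_1) = -\infty$, so $\{H_n(s_1)\geq z\}$ has probability zero and that term is trivially controlled.
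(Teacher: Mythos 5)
Your proof is correct and takes essentially the same route as the paper's: a union bound over the three events, the quadratic gap $H(R_0)-H(R)\geq (R-R_0)^2/2$ at the maximizer, and the Gaussian concentration of $H_n$ around $H$ (Lemma \ref{ConcIneqG}) with $t=x^2/4$, all three exponential bounds being dominated by the one at $s_2=R_0+x$. The only difference is cosmetic: the paper imports the gap inequality directly from the proof of Theorem 1.1 in \textcite{ChatLSE2014}, whereas you re-derive it from the concavity of $M$ and the quadratic identity (which is exactly the argument in that reference), and you additionally make explicit the harmless degenerate case $s_1<R_{min}$.
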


\begin{proof}

From the proof of Theorem 1.1 in \textcite{ChatLSE2014}, one can easily observe that for all $\lambda>0$ and any $R$ we have 
\begin{equation*}
H(R_0) - H(R) \geq  \frac{(R - R_0)^2}{2}.
\end{equation*}

Applying the inequality above to $R=s_1$ and $R=s_2$, we have that
 
\begin{equation*}
H(s_i) + \frac{x^2}{4} \leq
H(R_0) - \frac{x^2}{4}, \quad\quad i=1,2.
\end{equation*}

By Lemma \ref{ConcIneqG}, we have

\begin{equation*}
\mathbb{P}\left( H_n(R_0) \leq z \right) = \mathbb{P}\left( H_n(R_0) \leq
  H(R_0) -\frac{x^2}{4} \right) \leq e^{-\frac{n\,x^4}{32(R_0)^2}}. 
\end{equation*}

Therefore, using again Lemma \ref{ConcIneqG} yields

\begin{equation*}
\mathbb{P}\left( H_n(s_1) \geq z \right) \leq \mathbb{P}\left( H_n(s_1) \geq H(s_1) + \frac{x^2}{4} \right)
\leq  e^{-\frac{n\,x^4}{32(s_1)^2}} ,
\end{equation*}
and
\begin{equation*}
\mathbb{P}\left( H_n(s_2) \geq z \right) \leq \mathbb{P}\left( H_n(s_2) \geq H(s_2) + \frac{x^2}{4} \right)\leq  e^{-\frac{n\,x^4}{32(s_2)^2}} .
\end{equation*}

By the equations above, we obtain 
\begin{equation*}
\mathbb{P}\left( e^c \right) = \mathbb{P}\left( H_n(s_1)\geq z \right) +
\mathbb{P}\left( H_n(s_2)\geq z \right) + \mathbb{P}\left( H_n(R_0)\leq z
\right) \leq 3e^{-\frac{n\,x^4}{32(s_2)^2}}.
\end{equation*}
\end{proof}

Now we are ready to prove Theorem \ref{Theorem_CI}.

\begin{proof}[Proof of Theorem \ref{Theorem_CI}]
Let $\lambda>0$. First, we note that $H$ is equal to $-\infty$ when $R$ tends to infinity or $R <R_{min}$. Then, by Lemma \ref{Lemma:H_concave}, we know that $R_0$ is unique. \\
~\\
For
$x>0$, define the event
\begin{equation*}
 e := \left\lbrace \left\lbrace H_n(s_1) < z
  \right\rbrace\;\wedge\;\left\lbrace H_n(s_2) < z\right\rbrace\wedge
  \left\lbrace H_n(R_0) > z \right\rbrace \right\rbrace,
\end{equation*}

where $s_1=R_0 - x$, $s_2 =R_0 +x$, and $z = H(R_0) - x^2/4$. Therefore, we have that $s_1 < R_0 < s_2$ by construction. Moreover, we know that $H_n(R_*) \geq H_n(R_0)$ by definition of $R_*$. Since $H_n$ is strictly concave by Lemma \ref{Lemma:H_concave}, we must have that, in $e$,

\begin{equation}\label{proof:Rstar}
s_1 < R_*< s_2.
\end{equation}

Combining Lemma \ref{Lemma:UniquenessLSE} with equation \eqref{proof:Rstar} yields that, in $e$, 
\begin{equation*}
\left\vert \frac{\tau (\hat{f})}
  {R_0} -1  \right\vert < \frac{x}{R_0}. 
\end{equation*}


Therefore, by Lemma \ref{Lemma:control_E}, and letting $y = x / R_0$, we have
 
\begin{equation*}
\mathbb{P} \left( \Bigg\vert  \frac{\tau(\hat{f})}{R_0} - 1\Bigg\vert \geq y  \right) \leq 3 \exp \left(-\frac{n\,y^4 R_0^2}{32(1 + y)^2} \right)
\leq 3 \exp \left(- \frac{y^4 (\sqrt{n}R_0)^2}{32 (1 + y)^2} \right).
\end{equation*}

~\\
%
\end{proof}

\subsection{Proof of Theorem \ref{Theorem:BoundsTau}}\label{Ss:Proofs_Theorems}

For proving Theorem \ref{Theorem:BoundsTau}, we will need the following result. This lemma gives us bounds for the unknown nonrandom quantity $H(R)$. We note that these bounds can be written as parabolas with maximums and maximizers depending on $\alpha$, $n$, and $\lambda$.

\begin{Lemma} \label{Lemma_BoundsHR}		
Assume Condition \ref{Ss:CoveringNumberCondition} and let $\alpha$ be as stated there. For some constants\\ $ C \geq 1/2$, $0 < c_0 \leq 2$, $c_2 > c_1 >0$ not depending on the sample size and for all $\lambda >0$, we have

\begin{equation*}
g_{1}(R) < H(R) < g_{2}(R), \quad R\geq R_{min},
\end{equation*}

where $g_{i}(R) = -\frac{1}{2} (R-K_i)^2 + \frac{K_i^2}{2},\,i=1,2,$
 with

\begin{equation*}
K_1 := K_1(n,\lambda) := \frac{c_0^{1 - \alpha/2}\sqrt{c_1}}{2} \left( \frac{1}{n
    \lambda^\alpha}  \right)^{1/2},
\end{equation*}
\begin{equation*}
 K_2 := K_2(n,\lambda) := \frac{4 C \sqrt{c_2}}{2-\alpha} \left( \frac{1}{n \lambda^\alpha}\right)^{1/2}.
\end{equation*}
\end{Lemma}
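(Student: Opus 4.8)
The plan is to reduce the two-sided bound on $H$ to a two-sided bound on an expected Gaussian supremum, and then to apply standard chaining for the upper bound and a Sudakov-type minoration for the lower bound. Since $H(R) = M(R) - R^2/2$ and $g_i(R) = -\frac12(R-K_i)^2 + K_i^2/2 = K_i R - R^2/2$, the claim $g_1(R) < H(R) < g_2(R)$ is equivalent to $K_1 R < M(R) < K_2 R$, where $M(R) = \mathbb{E}\sup_{f\in\mathcal{F}(R)}\langle\epsilon, f-f^0\rangle/n$. The key observation is that $f\mapsto\langle\epsilon, f-f^0\rangle$ is a centered Gaussian process whose intrinsic (standard-deviation) metric is $\rho(f,g) = \sqrt{\mathrm{Var}\langle\epsilon, f-g\rangle} = \sqrt{n}\,\|f-g\|_n$, so everything reduces to controlling $\mathbb{E}\sup_{f\in\mathcal{F}(R)}\langle\epsilon, f-f^0\rangle$ in terms of the metric entropy furnished by Condition \ref{Ss:CoveringNumberCondition}.

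For the upper bound, I would apply Dudley's entropy integral in the metric $\rho$, then change variables back to $\|\cdot\|_n$ via $N(\eta, \mathcal{F}(R), \rho) = N(\eta/\sqrt{n}, \mathcal{F}(R), \|\cdot\|_n)$ and $u = \eta/\sqrt n$, producing a factor $\sqrt n$ and the integral $\int_0^{D}\sqrt{\log N(u, \mathcal{F}(R),\|\cdot\|_n)}\,du$ with upper limit $D\le 2R$, since $\|f-g\|_n\le 2R$ on $\mathcal{F}(R)$. Inserting the entropy bound, $\sqrt{\log N(u,\cdot)}\le\sqrt{c_2}(R/(\lambda u))^{\alpha/2}$, the integrand behaves like $u^{-\alpha/2}$, which is integrable near $0$ precisely because $\alpha<2$, and evaluates to a multiple of $\frac{1}{2-\alpha}\lambda^{-\alpha/2}R$. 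Dividing by $n$ cancels the $R$-powers and leaves $M(R)\le \frac{4C\sqrt{c_2}}{2-\alpha}R(n\lambda^\alpha)^{-1/2} = K_2 R$, with $C\ge 1/2$ absorbing the universal chaining constant together with the slack needed to turn the $\mathcal{O}$ in Condition \ref{Ss:CoveringNumberCondition} into a strict inequality.

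For the lower bound, I would invoke the Gaussian minoration deferred to the appendix, which gives $\mathbb{E}\sup_f\langle\epsilon, f-f^0\rangle \ge c_\star\,\eta\sqrt{\log N(\eta, \mathcal{F}(R),\rho)}$ at any single scale $\eta$. Choosing the $\rho$-scale $\eta = \sqrt n\, c_0 R$, i.e. the $\|\cdot\|_n$-scale $c_0 R$, and unwinding the second half of Condition \ref{Ss:CoveringNumberCondition} into $\log N(c_0 R, \mathcal{F}(R),\|\cdot\|_n)\ge c_1 (c_0\lambda)^{-\alpha}$, one obtains $\mathbb{E}\sup_f\langle\epsilon, f-f^0\rangle \ge c_\star\sqrt n\, c_0 R\sqrt{c_1}(c_0\lambda)^{-\alpha/2}$. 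Dividing by $n$ again cancels the $R$-powers and collects $M(R)\ge \frac{c_0^{1-\alpha/2}\sqrt{c_1}}{2}R(n\lambda^\alpha)^{-1/2} = K_1 R$.

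I expect the lower bound to be the main obstacle. The delicate points are: choosing the single scale $c_0 R$ at which minoration is applied so that the resulting bound has the same $R(n\lambda^\alpha)^{-1/2}$ order as the upper bound (this is exactly why the second, $R$-independent half of Condition \ref{Ss:CoveringNumberCondition} is formulated at the scale $c_0 R$); correctly passing between packing and covering numbers so that the entropy lower bound is usable in the minoration inequality; and checking that the surviving constants collapse to the stated $K_1$ (the factor $1/2$ leaving room to absorb the universal minoration constant and secure a strict inequality). The upper-bound side is essentially routine chaining, its only subtlety being convergence of the entropy integral, which is guaranteed by $\alpha\in(0,2)$.
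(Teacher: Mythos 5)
Your proposal is correct and follows essentially the same route as the paper's proof: the same reduction of $g_1(R)<H(R)<g_2(R)$ to $K_1R<M(R)<K_2R$, Dudley's entropy integral over $[0,2R]$ with the first part of Condition \ref{Ss:CoveringNumberCondition} for the upper bound, and Sudakov minoration at the single scale $c_0R$ with the second part of the condition (plus the packing-versus-covering comparison) for the lower bound. The only difference is cosmetic: you work with the unnormalized Gaussian process and metric $\sqrt{n}\,\|\cdot\|_n$, while the paper normalizes by $\sqrt{n}$ at the outset.
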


\begin{proof}[Proof of Lemma \ref{Lemma_BoundsHR}]
This proof makes use of known results for upper and lower bounds for the expected maxima of random processes, which can be found in the appendix. We will indicate this below.\\
~\\
Let $\lambda>0$ and $R \geq R_{min}$. For $f\in\mathcal{F}(R)$ and
$\epsilon_1,...,\epsilon_n$ standard Gaussian random variables, define $X_f := \sum_{i=1}^n \epsilon_i f(x_i) / \sqrt{n}$. Take any two functions $f, f'\in\mathcal{F}$ and note that $X_f - X_{f'}$ follows a Gaussian distribution with expected value $\mathbb{E}[X_f - X_{f'}]=0$ and variance

\begin{equation*}
Var(X_f-X_{f'}) = ||f||^2_n + ||f'||^2_n - 2Cov(X_f,X_{f'}) = ||f - f' ||^2_n.
\end{equation*}

Therefore $\{ X_f : f\in\mathcal{F}(R)\}$ is a sub-Gaussian process with respect to the metric $d(f,f')=||f -f'||_n$ on its index set (see Appendix). Note that, if we define the diameter of $\mathcal{F}(R)$ as $diam_n(\mathcal{F}(R)) := \sup_{x,y \in\mathcal{F}(R)} || x-y ||_n$, then it is not difficult to see that $ diam_n(\mathcal{F}(R))\leq 2R$.\\
~\\
Now we proceed to obtain bounds for $M(R)$. By Dudley's entropy bound (see Lemma \ref{Koltch2001} in Appendix) and Condition \ref{Ss:CoveringNumberCondition}, for some constants $C\geq 1/2$ and $c_2$, we have 

\begin{align*}
M(R) = \frac{1}{\sqrt{n}} \mathbb{E} \left[ \sup_{f\in\mathcal{F}(R)} X_f - X_{f^0} \right] 
&  \leq C \int_{0}^{2R} \sqrt{\frac{\log N(u,\mathcal{F}(R) ,
  ||\cdot||_n)}{n}} \mathrm{d}u \\
& \leq \frac{C \sqrt{c_2}}{\sqrt{n}} \int_0^{2R} \left( \frac{R}{u
  \lambda} \right)^{\alpha/2} \mathrm{d}u \\
 & \leq \frac{4 C \sqrt{c_2}}{2-\alpha} \left( \frac{1}{n
  \lambda^\alpha}\right)^{1/2} R. \\
\end{align*}

Moreover, by Sudakov lower bound (see Lemma \ref{SudakovIneq} in Appendix) we have

\begin{equation*}
\frac{1}{2} \sup_{0 <\epsilon \leq diam_n(\mathcal{F}(R))} \epsilon \sqrt{\frac{\log N_D (\epsilon, \mathcal{F}(R), || \cdot ||_n )}{n}} \leq M(R).
\end{equation*}

For some $0 < c_0 \leq 2 $, let $diam_n(\mathcal{F}(R))= c_0 R$ and take $\epsilon = c_0 R$ in the last equation. By Condition \ref{Ss:CoveringNumberCondition}, for some constant $c_1$ we have 

\begin{align*}
\frac{c_0 R}{2} \sqrt{\frac{\log N_D ( c_0 R , \mathcal{F}(R), || \cdot ||_n )}{n}}
& \geq \frac{c_0 R}{2} \sqrt{\frac{\log N( c_0 R , \mathcal{F}(R), || \cdot ||_n )}{n}}\\
& \geq \frac{c_0^{1 - \alpha/2}\sqrt{c_1}}{2} \left( \frac{1}{n \lambda^\alpha}  \right)^{1/2} R. 
\end{align*}

Then, by the equations above and the definition of $H(R)$, we have, for all $R \geq R_{min}$, 

\begin{equation*}
\frac{c_0^{1 - \alpha/2}\sqrt{c_1}}{2} \left( \frac{1}{n \lambda^\alpha}
\right)^{1/2} R - \frac{R^2}{2} < H(R) < \frac{4 C \sqrt{c_2}}{2-\alpha} \left( \frac{1}{n
  \lambda^\alpha}\right)^{1/2} R - \frac{R^2}{2}.
\end{equation*}

Writing $K_iR-R^2/2 = -\frac{1}{2}(R-K_i)^2 +
K_i^2/2$  for $i=1,2$ completes the proof.
\end{proof}

Now, we are ready to prove Theorem \ref{Theorem:BoundsTau}.

\begin{proof} [Proof of Theorem \ref{Theorem:BoundsTau}] By Condition \ref{Ss:RminCondition} and $\mathcal{I}(f^0) \asymp 1$, we know that there exist constants $0 < b_1 < b_2$ not depending on $n$ such that

\begin{equation}\label{proof:SimpleCondRmin}
b_1 \lambda \leq R_{min} \leq b_2 \lambda .
\end{equation}

Take 
\begin{equation}\label{proof:ChoiceLambda}
c'n^{-\frac{1}{2+\alpha}} \leq \lambda \leq c'' n^{-\frac{1}{2+\alpha}}
\end{equation}

with $c',c''$ constants satisfying $0 < c' < c'' \leq \left(\frac{c_0^{1 - \alpha/2}\sqrt{c_1}}{2 b_2}\right)^{\frac{2}{2+ \alpha}}$, where $\alpha$ is as in Condition \ref{Ss:CoveringNumberCondition}, $c_0$ and $c_1$ are as in Lemma \ref{Lemma_BoundsHR}, and $b_2$ as in equation \eqref{proof:SimpleCondRmin}.\\
~\\
First, we will derive bounds for $R_0$. 
Let $g_1$ and $g_2$ be as in Lemma \ref{Lemma_BoundsHR} and recall that $g_1(R) < H(R) < g_2(R)$ for $R \geq R_{min}$. Note that $g_1(R)\leq K_1^2/2$ for all $R \geq R_{min}$ and that this upper bound is reached at $R=K_1$. Moreover, we know that the function $g_2$ attain negative values when $R<0$ and when $R > 2K_2$.  Then, by strict concavity of $H$ (Lemma \ref{Lemma:H_concave}), if $R_{min}\leq K_1$, we must have that $R_0 \leq 2K_2$.\\
~\\
Now, combining equation \eqref{proof:SimpleCondRmin} and the choice in \eqref{proof:ChoiceLambda}, we obtain that $R_{min}\leq K_1$. Therefore, following the rationale from above and substituting equation $\eqref{proof:ChoiceLambda}$ into the definition of $K_2$, we have that there exist some constant $a_2 >0$ such that

\begin{equation}\label{eq:UboundsRE}
R_0 \leq a_2 n^{-\frac{1}{2 + \alpha}} 
\end{equation}

Furthermore, combining again equations \eqref{proof:SimpleCondRmin} and \eqref{proof:ChoiceLambda}, and recalling that $R_{min}\leq R_0$ yields that, for some constant $a_1 >0$, 

\begin{equation}\label{eq:LboundsRE}
R_0 \geq a_1 n^{-\frac{1}{2 + \alpha}} 
\end{equation}

Joining equations \eqref{eq:UboundsRE} and \eqref{eq:LboundsRE} gives us the first result in our theorem. \\
~\\
We proceed to obtain bounds for $\tau(\hat{f})$. Let $A_1,A_2$ be some constants such that\\ $0 < A_1 < a_1 < a_2 < A_2$. We have

\begin{align*}
P \left(  \tau(\hat{f}) \leq A_1 n^{-\frac{1}{2+\alpha}}  \right) 
& = P \left(  R_0 - \tau(\hat{f}) \geq R_0 - A_1 n^{-\frac{1}{2+\alpha}}  \right)  \\
& \leq P \left(  R_0 - \tau(\hat{f}) \geq  (a_1- A_1) n^{-\frac{1}{2+\alpha}}  \right) \\
& \leq 3 \exp \left( {-\frac{(a_1 - A_1)^4 \, n^{\frac{\alpha}{2 + \alpha}} }{32(a_2 + a_1 - A_1)}} \right),
\end{align*}

where in the first inequality we used equation \eqref{eq:LboundsRE}, and in the second, Theorem \ref{Theorem_CI} and equation \eqref{eq:UboundsRE}. Similarly, we have 

\begin{align*}
P \left(  \tau(\hat{f}) \geq A_2 n^{-\frac{1}{2+\alpha}} \right) 
& \leq P \left(  \tau(\hat{f}) - R_0 \geq (A_2-a_2) n^{-\frac{1}{2+\alpha}}  \right)  \\
&    \leq 3 \exp \left( {-\frac{(A_2 - a_2)^4 \, n^{\frac{\alpha}{2 + \alpha}} }{32 A_2}} \right),\\
\end{align*}

where in the first inequality we use equation \eqref{eq:UboundsRE}, and in the second, Theorem \ref{Theorem_CI} and again equation \eqref{eq:UboundsRE}. Therefore, we have 

\begin{align*}
~
& P \left( A_1 n^{-\frac{1}{2+\alpha}} \leq  \tau(\hat{f}) \leq A_2 n^{-\frac{1}{2+\alpha}}  \right)\\
&  =P \left(  \tau(\hat{f}) \leq A_2 n^{-\frac{1}{2+\alpha}}  \right) - P \left(  \tau(\hat{f}) \leq A_1 n^{-\frac{1}{2+\alpha}}  \right) \\
&  \geq 1 - 3   \exp \left( {-\frac{(a_1 - A_1)^4 \, n^{\frac{\alpha}{2 + \alpha}} }{32(a_2 + a_1 - A_1)}} \right)-3\exp \left( {-\frac{(A_2 - a_2)^4 \, n^{\frac{\alpha}{2 + \alpha}} }{32 A_2}} \right) 
\end{align*}

and the second result of the theorem follows.
\end{proof}


\printbibliography

\appendix
\section{Appendix}

\begin{Lemma}\label{GaussianConcIneq}[Gaussian Concentration Inequality. See, e.g. \textcite{ConcIneqBouLuMa13}]

Let $X=(X_1,...,X_n)$ be a vector of $n$ independent standard Gaussian random
variables. Let $f:\mathbb{R}^n \longrightarrow \mathbb{R}$ denote an
L-Lipschitz function. Then, for all $t>0$,

\begin{equation*}
\mathbb{P} \left( f(X) - \mathbb{E}f(X) \geq t  \right) \leq e^{-t^2/(2L^2)}.
\end{equation*}
\end{Lemma}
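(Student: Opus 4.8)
The plan is to establish this classical bound, with its sharp constant, via the Gaussian logarithmic Sobolev inequality combined with the Herbst argument. First I would reduce to a smooth $f$: an $L$-Lipschitz function can be approximated (for instance by convolution with a Gaussian mollifier) by smooth functions $f_\delta$ that remain $L$-Lipschitz and converge to $f$ pointwise, so that $\|\nabla f_\delta\|_2 \le L$ everywhere; passing to the limit in the resulting tail bound recovers the general Lipschitz case. I would also record that a Lipschitz function of a Gaussian vector has a finite moment generating function, since $|f(X)| \le |f(0)| + L\|X\|_2$ and $\|X\|_2$ has Gaussian-type tails; this legitimizes the differentiations below. Finally I would center $f$ without loss of generality so that $\mathbb{E} f(X) = 0$.

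The core input is the Gaussian log-Sobolev inequality: for $\gamma = N(0,I_n)$ and smooth $g$, one has $\mathrm{Ent}_\gamma(g^2) \le 2\,\mathbb{E}_\gamma \|\nabla g\|_2^2$, where $\mathrm{Ent}_\gamma(h) := \mathbb{E}_\gamma[h\log h] - \mathbb{E}_\gamma[h]\log\mathbb{E}_\gamma[h]$. I would apply it to $g = e^{\lambda f/2}$, for which $g^2 = e^{\lambda f}$ and $\|\nabla g\|_2^2 = \tfrac{\lambda^2}{4}\|\nabla f\|_2^2\, e^{\lambda f} \le \tfrac{\lambda^2 L^2}{4}\, e^{\lambda f}$. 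Writing $\psi(\lambda) := \mathbb{E}[e^{\lambda f}]$ for the moment generating function, the inequality becomes the differential inequality $\lambda \psi'(\lambda) - \psi(\lambda)\log\psi(\lambda) \le \tfrac{\lambda^2 L^2}{2}\,\psi(\lambda)$.

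The Herbst step integrates this. Setting $K(\lambda) := \lambda^{-1}\log\psi(\lambda)$, the differential inequality is exactly $K'(\lambda) \le L^2/2$, while $K(\lambda) \to \mathbb{E} f = 0$ as $\lambda \downarrow 0$ (by l'H\^opital's rule, since $\log\psi(\lambda) = \lambda\,\mathbb{E} f + O(\lambda^2)$). Integrating from $0$ gives $\log\mathbb{E}[e^{\lambda f}] \le \tfrac{1}{2} L^2\lambda^2$ for all $\lambda > 0$, i.e. $f(X)$ is sub-Gaussian with variance proxy $L^2$. A Chernoff bound then finishes the argument: $\mathbb{P}(f(X) \ge t) \le e^{-\lambda t}\,\mathbb{E}[e^{\lambda f}] \le e^{-\lambda t + L^2\lambda^2/2}$, and optimizing at $\lambda = t/L^2$ yields the claimed $e^{-t^2/(2L^2)}$.

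I expect the main obstacle to be supplying the log-Sobolev inequality itself, which is the one genuinely nontrivial ingredient; the standard route is to prove the two-point (Bernoulli) log-Sobolev inequality, tensorize it over the hypercube, and pass to the Gaussian limit through the central limit theorem, or else to establish it by a semigroup argument along the Ornstein--Uhlenbeck flow. A secondary technical point is the smoothing approximation together with the justification of differentiating $\psi(\lambda)$ under the expectation, both of which are routine given the a priori finiteness of the moment generating function noted above. I remark that one could bypass log-Sobolev entirely by invoking the Gaussian isoperimetric inequality (Borell, Sudakov--Tsirelson), which also delivers the sharp constant $1/(2L^2)$; since the paper only cites this lemma, either route is adequate.
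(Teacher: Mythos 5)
Your proposal is correct in all of its steps: the reduction to smooth $f$ by mollification, the a priori finiteness of the moment generating function, the application of the Gaussian log-Sobolev inequality $\mathrm{Ent}_\gamma(g^2) \le 2\,\mathbb{E}_\gamma \|\nabla g\|_2^2$ to $g = e^{\lambda f/2}$, the Herbst integration of the resulting differential inequality for $K(\lambda) = \lambda^{-1}\log\psi(\lambda)$, and the final Chernoff optimization all go through and deliver the sharp constant $e^{-t^2/(2L^2)}$. Note, however, that the paper does not prove this lemma at all: it is stated in the appendix as an imported result, with a citation to \textcite{ConcIneqBouLuMa13}, and is used as a black box in Lemma \ref{ConcIneqG}. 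So there is no internal proof to compare against; what you have done is supply the standard textbook proof of the Borell--Tsirelson--Ibragimov--Sudakov inequality that the cited reference contains. That is a perfectly adequate way to discharge the statement, and your closing remark is also accurate: the Gaussian isoperimetric route would serve equally well, and for the purposes of this paper (where only the stated tail bound is ever invoked) any proof yielding the constant $1/(2L^2)$ suffices.
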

~\\
The following lemma applies the Gaussian Concentration Inequality from above to show that the quantities $H_n$ and $H$ are close with exponential probability, as exploited by \textcite{ChatLSE2014}.
~\\
\begin{Lemma}\label{ConcIneqG}
For all $\lambda>0$, $R \geq R_{min}$,  and $t >0$, we have
\begin{equation*}
\mathbb{P} \left( |H_n(R) - H(R)| \geq t
\right) \leq 2e^{- n\,t^2/(2R^2)} .
\end{equation*}
\end{Lemma}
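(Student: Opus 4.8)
The plan is to reduce the statement to the Gaussian concentration inequality of Lemma \ref{GaussianConcIneq} applied to the map $\epsilon \mapsto M_n(R)$. First I would observe that the deterministic term $-R^2/2$ cancels: since $H_n(R) = M_n(R) - R^2/2$ and $H(R) = M(R) - R^2/2$, we have $H_n(R) - H(R) = M_n(R) - M(R)$, so it suffices to control the fluctuations of $M_n(R) = \sup_{f\in\mathcal{F}(R)} \langle \epsilon, f - f^0\rangle/n$ around its mean.

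Viewing $F(\epsilon) := M_n(R)$ as a function of the Gaussian vector $\epsilon \in \mathbb{R}^n$, the key step is to show that $F$ is Lipschitz with respect to the Euclidean norm $||\cdot||_2$, with constant $L = R/\sqrt{n}$. To this end, for noise vectors $\epsilon, \epsilon'$ I would pick a function $f$ that (nearly) attains the supremum defining $F(\epsilon)$ and write
\begin{equation*}
F(\epsilon) - F(\epsilon') \leq \frac{\langle \epsilon - \epsilon', f - f^0\rangle}{n} \leq \frac{||\epsilon - \epsilon'||_2 \, ||f - f^0||_2}{n},
\end{equation*}
using Cauchy--Schwarz. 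The crucial bookkeeping is the conversion between the two norms: since $||u||_n = ||u||_2/\sqrt{n}$, one has $||f - f^0||_2/n = ||f - f^0||_n/\sqrt{n}$, and because $f\in\mathcal{F}(R)$ forces $||f - f^0||_n^2 \leq \tau^2(f) \leq R^2$, this quantity is at most $R/\sqrt{n}$. Swapping the roles of $\epsilon$ and $\epsilon'$ gives $|F(\epsilon) - F(\epsilon')| \leq (R/\sqrt{n})\,||\epsilon - \epsilon'||_2$, i.e.\ $L = R/\sqrt{n}$.

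With the Lipschitz constant in hand, I would invoke Lemma \ref{GaussianConcIneq} for $F$ and for $-F$ (both $L$-Lipschitz), yielding for each $t > 0$
\begin{equation*}
\mathbb{P}\big(F(\epsilon) - \mathbb{E}F(\epsilon) \geq t\big) \leq e^{-t^2/(2L^2)} = e^{-nt^2/(2R^2)},
\end{equation*}
together with the same bound for the lower tail. A union bound over the two tails, combined with the identifications $F(\epsilon) = M_n(R)$, $\mathbb{E}F(\epsilon) = M(R)$, and $M_n(R) - M(R) = H_n(R) - H(R)$, then completes the argument.

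I expect the only genuine obstacle to be the careful verification of the Lipschitz property: handling the supremum correctly (either via a near-maximizer or via the fact that a pointwise supremum of $L$-Lipschitz functions is itself $L$-Lipschitz) and tracking the factors of $\sqrt{n}$ that arise from the difference between $||\cdot||_n$ and $||\cdot||_2$. Everything else is a direct substitution into the Gaussian concentration inequality.
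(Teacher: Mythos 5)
Your proposal is correct and follows essentially the same route as the paper: establish that $\epsilon \mapsto M_n(R)$ is $(R/\sqrt{n})$-Lipschitz via Cauchy--Schwarz and the bound $||f-f^0||_n \leq R$ on $\mathcal{F}(R)$, then apply Lemma \ref{GaussianConcIneq} to both tails and take a union bound. The only cosmetic differences are that you make the cancellation $H_n(R)-H(R)=M_n(R)-M(R)$ explicit (the paper leaves it implicit) and you use a near-maximizer argument where the paper bounds the difference of suprema by the supremum of differences; these are interchangeable.
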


\begin{proof}
Let $\lambda>0$. In this proof, we will write

\begin{equation*}
M_n(R) = M_n(R,\epsilon) = \sup_{f\in\mathcal{F}(R)}  \langle \epsilon, f- f^0 \rangle/n. 
\end{equation*}

Let $u$ and $v$ be two $n$-dimensional standard Gaussian random
vectors. By properties of the supremum and by Cauchy-Schwarz inequality, we have 

\begin{align*}
| M_n(R,u) - M_n(R,v) |
& = \left\vert \sup_{f\in\mathcal{F}(R)} \langle u, f-f^0\rangle/n -
  \sup_{f\in\mathcal{F}(R)} \langle v, f-f^0\rangle/n  \right\vert  \\
& \leq \sup_{f\in\mathcal{F}(R)} \Bigg\vert \langle u, f-f^0\rangle/n -
  \langle v, f-f^0\rangle/n
\Bigg\vert \\
& \leq \sup_{f\in\mathcal{F}(R)}  ||u-v||_n \;||f-f^0||_n \leq \frac{R}{\sqrt{n}} \left(\sum_{i=1}^n (u_i-v_i)^2\right)^{1/2}.
\end{align*}

Therefore, $M_n(R,\cdot)$ is $(R/\sqrt{n})$ - Lipschitz in its
second argument. By the Gaussian Concentration Inequality (Lemma \ref{GaussianConcIneq}), for every $t>0$ and every $R\geq R_{min}$, we have
\begin{equation*}
\mathbb{P} \left( M_n(R,\epsilon) - M(R,\epsilon) \geq t
\right) \leq e^{- n\,t^2/(2R^2)}.
\end{equation*}

Now, take $-M_n(R,\epsilon)$. Applying again the Gaussian Concentration Inequality yields

\begin{equation*}
\mathbb{P} \left( M_n(R,\epsilon) - M(R,\epsilon) \leq -t
\right) \leq e^{- n\,t^2/(2R^2)}.
\end{equation*}

Combining the last two equations yields the result of this lemma.
\end{proof}
~\\
For the next lemma, we will need the following definition:\\
~\\
A stochastic process $\{ X_t:t\in T \}$ is called \textit{sub-Gaussian with respect to the semi-metric
$d$ on its index set} if 
\begin{equation*}
P (|X_s - X_t|>x) \leq 2 e^{-\frac{x^2}{2 d^2(s,t)}},\quad\quad
\text{for every}\; s,t\in T,\,x>0.
\end{equation*}

\begin{Lemma}\label{Koltch2001}[Dudley's entropy bound. See, e.g. \textcite{koltchinskii2011oracle} ] If $\{ X_t : t\in T \}$ is a sub-Gaussian process with respect to $d$, then the following bounds hold with some numerical constant $C>0$:

\begin{equation*}
\mathbb{E} \sup_{t\in T} X_t \leq C \int_0^{D(T)} \sqrt{\log N(\epsilon, T, d)} \,\mathrm{d}\epsilon
\end{equation*}

and for all $t_0\in T$
\begin{equation*}
\mathbb{E} \sup_{t\in T} |X_t - X_{t_0}| \leq C \int_0^{D(T)} \sqrt{\log N(\epsilon, T, d)}\, \mathrm{d}\epsilon
\end{equation*}

where $D(T)=D(T,d)$ denotes the diameter of the space $T$.

\end{Lemma}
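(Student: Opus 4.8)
The plan is to prove both inequalities by the classical chaining argument. First I would observe that both bounds reduce to controlling $\mathbb{E}\sup_{t\in T}(X_t - X_{t_0})$ for a fixed base point $t_0$: for a centered process $\sup_{t}X_t = \sup_t(X_t - X_{t_0}) + X_{t_0}$, whose expectation equals $\mathbb{E}\sup_t(X_t - X_{t_0})$, while the absolute-value version follows by also bounding $\mathbb{E}\sup_t(X_{t_0}-X_t)$, which is identical by the symmetry of the increment condition. I would then reduce to the case where $T$ is finite: replacing $T$ by an increasing sequence of finite subsets whose union is dense and invoking monotone convergence (using separability of the process), it suffices to prove the bound with a constant independent of the finite subset. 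Throughout write $D=D(T)$ for the diameter and $N(\epsilon)=N(\epsilon,T,d)$.

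The core analytic ingredient is a maximal inequality for finitely many sub-Gaussian variables: if $Y_1,\dots,Y_m$ each satisfy $P(|Y_i|>x)\le 2e^{-x^2/(2\sigma^2)}$, then $\mathbb{E}\max_i Y_i \le c\,\sigma\sqrt{\log(1+m)}$ for a universal constant $c$. I would derive this by deducing a moment generating function bound $\mathbb{E} e^{\lambda Y_i}\le e^{c'\lambda^2\sigma^2}$ from the tail assumption and then using $\mathbb{E}\max_i Y_i \le \lambda^{-1}\log\bigl(\sum_i \mathbb{E} e^{\lambda Y_i}\bigr)$, optimized over $\lambda>0$.

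Next comes the chaining construction. For each integer $j\ge 0$ set $\epsilon_j = D\,2^{-j}$ and fix a minimal $\epsilon_j$-net $T_j$ with $|T_j|=N(\epsilon_j)$, taking $T_0=\{t_0\}$, which is admissible because $D=\epsilon_0$ is the diameter. Let $\pi_j:T\to T_j$ map each $t$ to a nearest net point, so $d(t,\pi_j(t))\le\epsilon_j$. For every $t$ the telescoping identity $X_t - X_{t_0}=\sum_{j\ge 1}(X_{\pi_j(t)}-X_{\pi_{j-1}(t)})$ holds, the tail converging since $\epsilon_j\to 0$. Each link obeys $d(\pi_j(t),\pi_{j-1}(t))\le \epsilon_j+\epsilon_{j-1}\le 3\epsilon_{j-1}$, and at level $j$ there are at most $N(\epsilon_j)N(\epsilon_{j-1})\le N(\epsilon_j)^2$ distinct links. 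Applying the maximal inequality level by level and summing gives
\[
\mathbb{E}\sup_{t\in T}(X_t - X_{t_0}) \le \sum_{j\ge 1}\mathbb{E}\max_{t}\bigl(X_{\pi_j(t)}-X_{\pi_{j-1}(t)}\bigr) \le c'\sum_{j\ge 1}\epsilon_{j-1}\sqrt{\log N(\epsilon_j)}.
\]

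Finally I would convert the series into the advertised integral: since $N(\cdot)$ is nonincreasing and $\epsilon_{j-1}=2\epsilon_j$, each term is bounded by a constant multiple of $\int_{\epsilon_{j+1}}^{\epsilon_j}\sqrt{\log N(\epsilon)}\,\mathrm{d}\epsilon$, so the whole sum is at most a constant times $\int_0^{D}\sqrt{\log N(\epsilon,T,d)}\,\mathrm{d}\epsilon$; absorbing all numerical factors into one constant $C$ yields the claim. I expect the main obstacle to lie not in the analytic estimates, which are routine once the sub-Gaussian maximal inequality is in hand, but in the bookkeeping of the reduction and telescoping steps, namely ensuring that the chaining sum genuinely telescopes to $X_t - X_{t_0}$ and that the supremum over the possibly uncountable (merely separable) index set is legitimately controlled by suprema over the finite nets.
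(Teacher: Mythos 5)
The paper offers no proof of this lemma for you to be compared against: Lemma \ref{Koltch2001} is stated in the appendix as a result quoted from the literature (see \textcite{koltchinskii2011oracle}) and is used as a black box in the proof of Lemma \ref{Lemma_BoundsHR}. Judged on its own, your argument is the standard chaining proof of Dudley's entropy bound, and its architecture is correct: reduction to $\mathbb{E}\sup_{t}(X_t-X_{t_0})$, a finite sub-Gaussian maximal inequality, dyadic nets with telescoping links, and the series-to-integral conversion (indeed $\epsilon_{j-1}\sqrt{\log N(\epsilon_j)}\le 4\int_{\epsilon_{j+1}}^{\epsilon_j}\sqrt{\log N(\epsilon)}\,\mathrm{d}\epsilon$ by monotonicity of $N$). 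What your proof buys over the paper's citation is self-containedness, and your reading of the statement is in one respect more careful than the paper's: with the paper's definition of sub-Gaussian process, which does not require centering, the first display is actually false (take $X_t\equiv Z$ with $\mathbb{E}Z$ large; all increments vanish and the entropy integral is zero), so your restriction to centered processes is exactly the hypothesis under which the quoted result holds.

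Four technical points should be tightened before the proof is airtight, though none affects its structure. First, the paper's covering numbers allow net points in the ambient space $\mathcal{S}$, whereas chaining needs $\pi_j(t)\in T$ for $X_{\pi_j(t)}$ to be defined; pass to internal nets, which exist at scale $2\epsilon$ with cardinality at most $N(\epsilon,T,d)$, at the cost of a constant. Second, your intermediate claim $\mathbb{E}e^{\lambda Y_i}\le e^{c'\lambda^2\sigma^2}$ for all $\lambda>0$ does not follow from the two-sided tail alone, since that tail condition does not force $\mathbb{E}Y_i=0$ (the constant $Y\equiv\sigma/2$ satisfies it); either restrict to the range $\lambda\sigma\gtrsim 1$ actually used after optimization, or obtain $\mathbb{E}\max_i|Y_i|\le c\,\sigma\sqrt{\log(1+m)}$ directly by integrating the union bound $\mathbb{P}(\max_i|Y_i|>x)\le\min\{1,2me^{-x^2/(2\sigma^2)}\}$. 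Third, at levels where $N(\epsilon_j)=1$ your level-$j$ bound degenerates to zero while the link need not vanish; these levels contribute at most $O(\epsilon_{j-1})$, summing to $O(D)$, which is absorbed into $C$ because the entropy integral of a set of diameter $D$ is at least $(D/2)\sqrt{\log 2}$ (a single ball of radius $\epsilon<D/2$ cannot cover $T$). Fourth, your reduction to finite $T$ silently assumes separability of the process, which the paper's statement also omits; it is worth making explicit, and for finite $T$ with internal nets the telescoping sum indeed terminates, as you indicate.
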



\begin{Lemma}\label{SudakovIneq}[Sudakov Lower Bound. See, e.g.  \textcite{ConcIneqBouLuMa13} ] Let $T$ be a finite set and let $(X_t)_{t\in T}$ be a
Gaussian vector with $\mathbb{E}X_t = 0\,\, \forall t$. Then,

\begin{equation*}
\mathbb{E} \sup_{t\in T} X_t \geq \frac{1}{2} \min_{t \neq t'}
\sqrt{\mathbb{E} [(X_t-X_{t'})^2] \log | T|}.
\end{equation*}

Moreover, let $d$ be a pseudo-metric on $ T$ defined by $d(t,t')^2 = \mathbb{E} [(X_t-X_{t'})^2]$. For all\\ $\epsilon>0$ smaller than the diameter of
$T$, the lower bound from above can be rewritten as 

\begin{equation*}
\mathbb{E} \sup_{t\in T} X_t \geq \frac{1}{2} \epsilon
\sqrt{\log N_D(\epsilon, T,d)}.
\end{equation*}
\end{Lemma}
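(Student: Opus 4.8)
The plan is to establish the pairwise-distance form of the bound first and then obtain the packing-number reformulation as a direct corollary. For the first inequality I would use a Gaussian comparison argument: the process $(X_t)_{t\in T}$ is compared against a simple reference process built from independent Gaussians whose increments are dominated by those of $X$, and the known behaviour of the maximum of independent Gaussians then supplies the $\sqrt{\log|T|}$ factor. This keeps the argument entirely within the toolbox already used in the appendix (Gaussian variables and their maxima).

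Concretely, I would write $a := \min_{t \neq t'} \sqrt{\mathbb{E}[(X_t - X_{t'})^2]}$ and introduce a reference family $Y_t := (a/\sqrt{2})\, g_t$, where $\{g_t : t \in T\}$ are independent standard Gaussian variables. For $t \neq t'$ one has $\mathbb{E}[(Y_t - Y_{t'})^2] = a^2 \leq \mathbb{E}[(X_t - X_{t'})^2]$, while for $t = t'$ both sides vanish, so the increments of $Y$ are dominated by those of $X$. The Sudakov--Fernique comparison inequality then gives $\mathbb{E}\sup_{t} X_t \geq \mathbb{E}\sup_t Y_t = (a/\sqrt{2})\,\mathbb{E}\max_{t\in T} g_t$. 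It remains to bound the expected maximum of $N := |T|$ independent standard Gaussians from below by $(1/\sqrt{2})\sqrt{\log N}$; combined with the previous identity this yields exactly the stated constant $\tfrac12$.

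For the packing-number form I would restrict to a maximal $\epsilon$-separated subset. Fix $\epsilon$ below the diameter of $T$ and pick $T' \subseteq T$ with $|T'| = N_D(\epsilon, T, d)$ and $d(s,t) > \epsilon$ for all distinct $s,t \in T'$. Applying the first inequality to the subfamily $(X_t)_{t\in T'}$, whose minimal increment standard deviation $\min_{s\neq t} d(s,t)$ exceeds $\epsilon$, and using the monotonicity $\mathbb{E}\sup_{t\in T} X_t \geq \mathbb{E}\sup_{t\in T'} X_t$, delivers $\mathbb{E}\sup_{t\in T} X_t \geq \tfrac12 \epsilon \sqrt{\log N_D(\epsilon, T, d)}$.

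The main obstacle is the quantitative lower bound on $\mathbb{E}\max_{i\le N} g_i$ for independent standard Gaussians with a constant sharp enough to reach the factor $\tfrac12$: this requires a careful tail-integration (or truncation) estimate rather than the crude asymptotic $\sqrt{2\log N}$, and in fact the clean universal constant does not survive for the very smallest values of $N$, so the bound is naturally read in the regime where $N$ (here a packing number) is large, which is precisely the regime in which Lemma \ref{SudakovIneq} is applied. The comparison step itself is routine once Sudakov--Fernique is available; I would rely on that inequality rather than on Slepian's lemma, since only domination of increment variances, and not of covariances, is needed here.
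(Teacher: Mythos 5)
The paper never proves this lemma: it sits in the appendix as a result imported verbatim from the cited reference, so there is no in-paper proof to compare against. Your reconstruction is the canonical argument, and it is essentially the proof given in that reference: dominate the increments of the scaled independent family $Y_t=(a/\sqrt{2})\,g_t$ by those of $(X_t)$, invoke Sudakov--Fernique (which needs only increment-variance domination, as you note, not Slepian's covariance conditions), reduce to a lower bound for $\mathbb{E}\max_{i\le N}g_i$, and obtain the packing form by restricting to a maximal $\epsilon$-separated subset $T'$ and using $\mathbb{E}\sup_{t\in T}X_t\ge\mathbb{E}\sup_{t\in T'}X_t$. All of these steps are correct.

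The one genuine issue is the one you flag yourself, and it is worth making precise: the bound $\mathbb{E}\max_{i\le N}g_i\ge\sqrt{(\log N)/2}$ needed to reach the constant $\tfrac12$ fails at $N=2$, since $\mathbb{E}\max(g_1,g_2)=1/\sqrt{\pi}\approx 0.564$ while $\sqrt{(\log 2)/2}\approx 0.589$. This is not a defect of your method but of the statement as quoted: for any centered Gaussian pair, $\mathbb{E}\max(X_1,X_2)=\tfrac12\mathbb{E}|X_1-X_2| = d/\sqrt{2\pi}\approx 0.399\,d$, which is strictly smaller than $\tfrac12 d\sqrt{\log 2}\approx 0.416\,d$, so the lemma with constant $\tfrac12$ is simply false for $|T|=2$. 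For $N\ge 3$ the required numerical bound does hold (e.g.\ $\mathbb{E}\max_{i\le 3}g_i = 3/(2\sqrt{\pi})\approx 0.846 > \sqrt{(\log 3)/2}\approx 0.741$, and the margin grows with $N$ since $\mathbb{E}\max_{i\le N}g_i\sim\sqrt{2\log N}$), so your argument establishes the lemma for $|T|\ge 3$. In the only place the paper uses the lemma (Lemma \ref{Lemma_BoundsHR}, where $|T'|$ is a packing number that diverges with $n$ under Condition \ref{Ss:CoveringNumberCondition}), this restriction is immaterial.
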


\end{document}